\newtheorem{thm}{Theorem}[section]
\newtheorem{lem}[thm]{Lemma}
\newtheorem{cor}[thm]{Corollary}
\theoremstyle{remark}
\newtheorem{rem}[thm]{Remark}
\def\gq{\mathfrak q}
\def\gp{\mathfrak p}
\def\gP{\mathfrak P}
\def\gm{\mathfrak m}
\def\gx{\mathfrak x}
\def\gy{\mathfrak y}
\def\A{\mathcal A}
\def\O{\mathcal O}
\def\G{\mathcal G}
\def\D{\mathcal D}
\def\M{\mathcal M}
\def\N{\mathcal N}
\def\Y{\mathcal Y}
\def\MT{\mathcal{MT}}
\def\Abar{\overline{A}}
\def\Nbar{\overline{N}}
\def\ebar{\overline{\eta}}
\def\zbar{\overline{z}}
\def\q{\mathbb Q}
\def\f{\mathbb F}
\def\ad{{\mathbb A}^\infty}
\def\ap{{\mathbb A}^{\infty,p}}
\def\c{\mathbb C}
\def\g{\mathbb G}
\def\p{\mathbb P}
\def\r{\mathbb R}
\def\z{\mathbb Z}
\def\zd{\mathaccent94\z}
\def\Sd{\mathaccent94\Sigma}
\def\Id{\mathaccent94I}
\def\Rd{\mathaccent94 R}
\def\End{\operatorname{End}}
\def\Aut{\operatorname{Aut}}
\def\GL{\operatorname{GL}}
\def\GSp{\operatorname{GSp}}
\def\U{\operatorname{U}}
\def\Ext{\operatorname{Ext}}
\def\Hom{\operatorname{Hom}}
\def\Mat{\operatorname{Mat}}
\def\Lie{\operatorname{Lie}}
\def\Gal{\operatorname{Gal}}
\def\Spec{\operatorname{Spec}}
\def\Spf{\operatorname{Spf}}
\def\Df{\operatorname{Defo}}
\def\tr{\operatorname{tr}}
\def\dm{\operatorname{dim}}
\def\ho{\operatorname{ht}}
\title{Embeddings of the complex ball into Siegel space}
\author{Oliver B\"ultel}
\address{Mathematisches Institut der Universit\"at Heidelberg, 
Im Neuenheimer Feld 288, 69120 Heidelberg, Germany,
bueltel@mathi.uni-heidelberg.de, fax:+49 6221 548312}
\keywords{integral models of Shimura varieties, extension properties of 
abelian schemes and of $p$-divisible groups, Subject Classification(2000): 
14L05, 14K10, 14C30}
\begin{document}
\maketitle

\begin{abstract}
We study properties of a certain map from the unitary group $U(1,n-1)$ to
the group $\U(\binom{n-1}{k-1},\binom{n-1}{k})$. We explain how it gives rise 
to a map between canonical models of Shimura varieties and we prove that it 
extends to the ordinary locus of the integral model. Finally, we extend results
of Satake on the endomorphism ring of a generic image point to positive 
characteristics.
\end{abstract}

\keywords

\section{Introduction}

In ~\cite{satake} Satake classifies embeddings of a symmetric Hermitian space 
$X$ into Siegel space, let us describe his results in a special case: The 
bounded realization of $I_{p,n-p}$ is a domain in complex affine 
$p(n-p)$-space which may conveniently be described as the set of complex
valued $p\times(n-p)$-matrices
$$
B_{p,n-p}=\{A\in\Mat(p,n-p)|\mbox{ }||A||_\infty<1\}
$$
where $||\cdot||_\infty$ is the operatornorm (relative to the standard Hilbert 
space norms $||\cdot||_2$ on $\c^p$ and $\c^{n-p}$ respectively). If we write 
$S_n$ for the bounded realization of $III_n$, then there is a natural 
holomorphic embedding 
\begin{equation}
\label{einb}
B_{p,n-p}\hookrightarrow S_n.
\end{equation}
In the special case $p=1$ we recover the complex ball in $\c^{n-1}$. 
Surprisingly ~\eqref{einb} is not the sole symplectic embedding of 
$B_{1,n-1}$, as one has maps
\begin{equation}
\label{zweib}
B_{1,n-1}\hookrightarrow B_{\binom{n-1}{k-1},\binom{n-1}{k}}
\hookrightarrow S_{\binom{n}{k}}
\end{equation}
which are induced by sending $x\in\c^{n-1}$ to the 
$\binom{n-1}{k-1}\times\binom{n-1}{k}$-matrix $A$ with entries
$$
A_{I,J}=
\begin{cases}
(-1)^{\nu-1}x_{i_\nu}&I=J-\{i_\nu\}\\
0&\text{otherwise}
\end{cases}
$$
where $I=\{i_1,\dots,i_{k-1}\}$ and $J=\{i_1,\dots,i_k\}$ run 
through all subsets of $\{1,\dots,n-1\}$ with cardinalities $k-1$ and $k$. We 
want to study the effect of ~\eqref{zweib} to quotients of $B_{1,n-1}$ by 
congruence subgroups and their canonical models. We consider, in more modern 
language, a Shimura datum $(G,X)$, such that $G$ is a $\q$-form of $\GL(n,\c)$ 
and such that the conjugacy class $\mu$ of minuscule cocharacters obtained 
from $X$ is:
$$
\c^\times\ni z\mapsto\left(\begin{matrix} 
z&0&\dots&0\\
0&1&\dots&0\\
\vdots&\vdots&\ddots&\vdots\\
0&0&\dots&1
\end{matrix}\right)\in\GL(n,\c).
$$
If $\mu$ is as above then all of the $n-1$ fundamental dominant weights 
$\lambda$ satisfy Deligne's condition $<\mu,\lambda+c(\lambda)>=1$, $c$ is 
the Weil-opposition. Thus one expects by ~\cite[1.3]{deligne3} that (suitable 
$\q$ forms of) their highest weight representations provide maps from the 
Shimura datum $(G,X)$ to a symplectic Shimura datum 
$(\GSp(2\binom{n}{k},\q),S_{\binom{n}{k}}^\pm)$. Such a symplectic 
representation of $(G,X)$ gives rise to the map ~\eqref{zweib} of the 
underlying symmetric Hermitian space, which in our case is the complex ball. 
Now consider a neat level structure $K\subset G(\ad)$. By Deligne's theory we 
obtain a Shimura variety
\begin{equation}
\label{drei}
M(\c)=G(\q)\backslash(X\times G(\ad)/K)
\end{equation}
which has weakly canonical models over all fields containing the reflex. 
Moreover, the aforementioned symplectic maps provide us with certain abelian 
schemes $Y^{(k)}$ over $M$. (Here one has to allow to replace $(G,X)$ by a 
certain covering Shimura datum denoted $(G_1,X_1)\rightarrow(G,X)$ in 
~\cite[Proposition 2.3.10]{deligne3}, see body of text for details.) If one 
assumes that $G$ and $K$ are sufficiently well behaved at $p$, then the theory 
of $PEL$-moduli spaces provides us with a smooth model $\M$.\\
In this paper we study the extension properties of $Y^{(k)}$ with respect to 
the model $\M$. Our result is that $Y^{(k)}$ extends to an abelian scheme at 
least over the ordinary locus of $\M$. Our result holds for all primes 
including $2$.\\ 
In a future publication we will show that one can remove the ordinarity 
hypothesis at the cost of having to restrict to primes bigger than $k+1$ along 
with somewhat stronger conditions on $G$ and $K$. More general results (under 
similar restrictions on the prime) have already been shown in \cite{vasiu}.\\
The work is organised as follows: In section ~\ref{Hodge} we describe the 
Shimura datum $(G_1,X_1)$ and the abelian schemes $Y^{(k)}$ using the 
language of Hodge structures, we do not attempt to work with all possible 
$\q$-forms of $G$ and of $G_1$ but rather confine ourself to what we think 
is the most natural one. This allows us to give a very down to earth (and we 
hope enlightening) treatment of the covering Shimura datum $(G_1,X_1)$ 
(denoted $(G^{(0\times1)},X^{(0\times1)})$ in body of text), and of its 
symplectic representations (which we denote by $V^{(k)}$). In section 
~\ref{deform} we recall canonical coordinates and canonical lifts, to a 
large extend with sketchy proof. In section ~\ref{good} we show that under 
the usual assumptions on $K$ the variety $M$ has a smooth model over the
ring of integers in the local field $E_\gp$, where $E$ is a certain
finite extension of the reflex field and where $\gp$ is a prime over $p$. In 
section ~\ref{lego} we prove the extension theorem in two steps, we start 
with the construction of an extension of the Barsotti-Tate group of 
$Y^{(k)}$, and then we show that this determines a well-defined extension 
of $Y^{(k)}$ itself, essentially by Zariski's main theorem. Finally, we
show in section ~\ref{ende} how some of Satake's work on the endomorphism 
ring of $Y^{(k)}$ can be carried over to the points in the ordinary
locus of the special fibre of $\M$.\\

I thank Prof. Ivan Fesenko, Prof. Eberhard Freitag, Prof. Winfried Kohnen, 
Prof. Richard Pink, Prof. Richard Taylor, and Prof. Rainer Weissauer for 
interesting conversations on the topic and further thanks go to the referee.

\section{Exterior powers of Hodge structures}
\label{Hodge}

Let $V$ be a finitely generated torsion free abelian group. It is called a 
($\z$-) Hodge structure of weight $r$ if it is equipped with a decomposition
$$
V_\c\cong\bigoplus_{p+q=r}V^{p,q}
$$
satisfying $V^{q,p}=\overline{V^{p,q}}$. A direct sum
$$
V\cong\bigoplus_rV_r,
$$
of ($\z$-) Hodge structures $V_r$ of weight $r$, we want to call a pure 
($\z$-) Hodge structure. To give the decomposition over the reals is 
equivalent to give a homomorphism of $\r$-algebraic groups
$$
h:\c^\times\rightarrow\GL_\r(V_\r)\subset\GL_\c(V_\c)
$$
by making $z\in\c^\times$ act on $V^{p,q}$ by $z^{-p}\zbar^{-q}$. If the only 
non-zero direct summands amongst the $V^{p,q}$'s are $V^{-1,0}$ and 
$V^{0,-1}$ then the homomorphism above corresponds to a homomorphism between 
the $\r$-algebras $\c$ and $\End_\r(V_\r)$, which we continue to denote by 
$h$. In that case $V$ is called a Hodge structure of type $\{(-1,0),(0,-1)\}$,
and the $\c$-space $V^{-1,0}$ can set-theoretically be identified with 
$V_\r$ which acquires its $\c$-vector space structure by the 
corresponding algebra homomorphism $h:\c\rightarrow\End_\r(V_\r)$. Pure Hodge 
structures form an additive $\otimes$-category in an obvious way. The pure 
Hodge structures of type $\{(-1,0),(0,-1)\}$ are an additive full subcategory 
thereof.\\
Let $\O_L$ be the ring of integers in a totally imaginary quadratic extension 
$L$ of a totally real number field $L^+$. Write $*$ for the non-trivial 
involution of $L/L^+$. A Hodge structure $V$ is said to have a 
$\O_L$-operation if a homomorphism $\iota:\O_L\rightarrow\End(V)$ is given. 
If $\iota$ is fixed we obtain a refinement of the Hodge decomposition:
$$
V_\sigma=V\otimes_{L,\sigma}\c\cong\bigoplus_{p,q}V_\sigma^{p,q},
$$
where we denote for any embedding $\sigma:L\rightarrow\c$ by $V_\sigma^{p,q}$ 
the $\sigma$-eigenspace of $V^{p,q}$. Note that we have 
$V^{q,p}_\sigma\cong\overline{V_{\sigma\circ*}^{p,q}}$. The Hodge structures 
with $\O_L$-operation form a $\otimes$-category as follows: If $V$ and $W$ are 
given, we can form the finitely generated torsion free $\O_L$-module 
$V\otimes_{\O_L}W$. We make it into a Hodge structure by declaring the
subspace of $V_\sigma\otimes_\c W_\sigma$ of weight $(p,q)$ to be:
$$
\bigoplus_{p_1,q_1}
V_\sigma^{p_1,q_1}\otimes_\c W_\sigma^{p-p_1,q-q_1}.
$$
In a similar way we define $\bigwedge_{\O_L}^kV$. We want to describe a 
particularly interesting example of such tensor constructions, to this end 
fix a natural number $n$. Fix two different $CM$-traces 
$\Phi^{(0)}$, $\Phi^{(n)}$: $L\rightarrow\c$ ($=\q$-linear maps which arise as 
$\Phi^{(k)}(x)=\sum_{\sigma\in|\Phi^{(k)}|}\sigma(x)$ for all $x\in L$ and 
$k\in\{0,n\}$, where $|\Phi^{(0)}|,|\Phi^{(n)}|\subset\Hom_\q(L,\c)$). Let us 
define:
\begin{equation}
\label{vfuenf}
\Phi^{(1)}(x)=(n-1)\Phi^{(0)}(x)+\Phi^{(n)}(x)
\end{equation}
Our input is a choice of:
\begin{itemize}
\item
two Hodge structures of type $\{(-1,0),(0,-1)\}$ with $\O_L$-operation, i.e. 
$\r$-linear homomorphisms
\begin{equation}
\label{heins}
h^{(k)}:\c\rightarrow\End_L(V_\r^{(k)}),
\end{equation}
for $k\in\{0,1\}$, where the $V^{(k)}$ are finitely generated, torsion free 
$\O_L$-modules. We require that:
\begin{equation}
\label{veins}
\tr(x|_{{V^{(k)}}^{-1,0}})=\Phi^{(k)}(x),
\end{equation}
hold for all $x\in L$, and $k\in\{0,1\}$.
\item
two $*$-skew-Hermitian $\q$-valued forms $\psi^{(1)}$, and $\psi^{(0)}$ on 
the $L$-spaces $V_\q^{(1)}$, and $V_\q^{(0)}$, such that firstly the $h^{(k)}$ 
become $*$-involution preserving homomorphisms, if the right hand side of 
~\eqref{heins} is endowed with the Rosati involution, and such that secondly 
the forms $\psi^{(k)}(x,h^{(k)}(i)y)$ on $V_\r^{(k)}$ are positive definite.
\end{itemize}
Observe that ~\eqref{veins} implies that the $\O_L$-modules $V^{(1)}$ and 
$V^{(0)}$ have the ranks $n$ and $1$. Observe also that we may write 
the forms $\psi^{(1)}$ and $\psi^{(0)}$ in a unique way as traces 
$\tr_{L/\q}\Psi^{(1)}$, and $\tr_{L/\q}\Psi^{(0)}$, where $\Psi^{(1)}$, and 
$\Psi^{(0)}$ are sesquilinear forms that are $L$-valued on $V_\q^{(1)}$, and 
on $V_\q^{(0)}$. Based on the 2nd and 3rd row of the table on page 188 of 
~\cite[Chapter IV,Paragraph 5]{satake} we want to consider the following 
output:
 
\begin{enumerate}
\item
$\O_L$-modules $V^{(k)}$ defined by: 
$$
{V^{(0)}}^{\otimes_{\O_L}1-k}\otimes_{\O_L}\bigwedge_{\O_L}^kV^{(1)},
$$
for every $k\in\{0,\dots,n\}$,
\item
$\q$-valued $(L,*)$-skew-Hermitian forms 
$\psi^{(k)}=\tr_{L/\q}\Psi^{(k)}$ on $V_\q^{(k)}$, here is:
$$
\Psi^{(k)}(x_0^{1-k}x_1\wedge\dots\wedge x_k,y_0^{1-k}y_1\wedge\dots\wedge y_k)
$$
$$
=(\Psi^{(0)}(x_0,y_0))^{1-k}\det(\Psi^{(1)}(x_i,y_j)_{i,j}),
$$
for $x_1,\dots,x_k,y_1,\dots,y_k\in V_\q^{(1)}$, and $x_0,y_0\in V_\q^{(0)}$,
\item
$*$-homomorphisms $h^{(k)}:\c^\times\rightarrow\Aut(V_\r^{(k)})$ defined so 
that $z\in\c^\times$ acts as:
$$
x_0^{1-k}x_1\wedge\dots\wedge x_k\mapsto
(h^{(0)}(z)x_0)^{1-k}h^{(1)}(z)x_1\wedge\dots\wedge h^{(1)}(z)x_k.
$$
\end{enumerate}
We have to introduce reductive $\q$-groups as follows, on the category of
all $\q$-algebras $G^{(k)}$ represents the functor:
$$
C\mapsto\{(\gamma,\mu)\in\End_{L\otimes C}^\times(V_C^{(k)})\times C^\times|
\psi^{(k)}(\gamma x,\gamma y)=\mu\psi^{(k)}(x,y)\}
$$
Notice the natural similitude morphisms from $G^{(k)}$ to $\g_m$. We also 
introduce a group $G^{(0\times 1)}$ by the requirement that the diagram
$$\begin{CD}
G^{(0\times 1)}@>g^{(1)}>>G^{(1)}\\
@Vg^{(0)}VV@VVV\\
G^{(0)}@>>>\g_m
\end{CD}
$$
be cartesian. Finally we introduce $\q$-rational group homomorphisms
$g^{(k)}:G^{(0\times1)}\rightarrow G^{(k)}$ by sending, say 
$(\gamma^{(0)},\gamma^{(1)})\in G^{(0\times1)}(C)$ to the element 
$\gamma^{(k)}:V_C^{(k)}\rightarrow V_C^{(k)}$ defined by
\begin{equation}
\label{vsechs}
\gamma^{(k)}(x_0^{1-k}x_1\wedge\dots\wedge x_k)=
\gamma^{(0)}(x_0)^{1-k}\gamma^{(1)}(x_1)\wedge\dots\wedge\gamma^{(1)}(x_k).
\end{equation}
Let us also pick connected, smooth $\z$-models $G_\z^{(k)}$ 
($G_\z^{(0\times1)}$) of the groups $G^{(k)}$ ($G^{(0\times1)}$) by taking 
the schematic closure in the group $\z$-schemes $\GL(V^{(k)}/\z)$ 
($\GL(V^{(0)}\oplus V^{(1)}/\z)$). Here the $\O_L$-lattices 
$V^{(k)}\subset V_\q^{(k)}$ 
($V^{(0)}\oplus V^{(1)}\subset V_\q^{(0)}\oplus V_\q^{(1)}$), are as before.
The maps $g^{(k)}$ preserve these lattices and so give rise to maps 
$g_\z^{(k)}:G_\z^{(0\times1)}\rightarrow G_\z^{(k)}$. The $V^{(k)}$ are 
clearly Hodge structures with $\O_L$-operation, however much more is true:

\begin{lem}
\label{DATA}
Let $(V^{(1)},V^{(0)},\psi^{(1)},\psi^{(0)},h^{(1)},h^{(0)})$ be as before. 
Then $(V^{(k)},h^{(k)})$ is a Hodge structure of type $\{(-1,0),(0,-1)\}$ 
with $\O_L$-operation. Moreover, the quadruples
$$
(L,*,V_\q^{(k)},\psi^{(k)},h^{(k)})
$$
are $PEL$-data in the sense of ~\cite{kottwitz}, as is the quadruple
$$
(L\oplus L,*,V_\q^{(0)}\oplus V_\q^{(1)},\psi^{(0)}
\oplus\psi^{(1)},h^{(0)}\oplus h^{(1)}).
$$
The $G^{(k)}(\r)$-conjugacy class of $h^{(k)}$ is determined by the formula:
\begin{equation}
\label{vzwei}
\tr(x|_{{V^{(k)}}^{-1,0}})=\Phi^{(k)}(x)=
\binom{n-1}{k}\Phi^{(0)}(x)
+\binom{n-1}{k-1}\Phi^{(n)}(x).
\end{equation}
for any $x\in L$. Therefore, when writing $E^\delta$ for the reflex 
field of $(G^\delta,h^\delta)$, we have $E^{(k)}\subset E^{(0\times1)}$
for all $k$. Finally, the map $g^{(k)}:G^{(0\times1)}\rightarrow G^{(k)}$ 
takes $h^{(0\times1)}:=h^{(0)}\oplus h^{(1)}$ to $h^{(k)}$. 
\end{lem}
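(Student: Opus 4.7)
The plan is to reduce everything to an analysis of the components $V^{(k)}_\sigma=V^{(k)}\otimes_{L,\sigma}\c$ for $\sigma:L\to\c$, exploiting the fact that $V^{(0)}$ is an invertible $\O_L$-module (by the rank computation from \eqref{veins}), so that $V^{(0)\otimes_{\O_L}(1-k)}$ is well-defined for $k\geq 2$ as the $(k-1)$-fold tensor of its $\O_L$-dual. Since $\O_L$ acts compatibly with the Hodge structures on both $V^{(0)}$ and $V^{(1)}$, the component $V^{(0)}_\sigma$ is entirely of Hodge type $(-1,0)$ when $\sigma\in|\Phi^{(0)}|$ and of type $(0,-1)$ otherwise; while $V^{(1)}_\sigma$ decomposes as $V^{(1),-1,0}_\sigma\oplus V^{(1),0,-1}_\sigma$ whose dimensions $a_\sigma$ and $n-a_\sigma$ are read off from \eqref{vfuenf}.

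For claims (1) and (3), I would first observe that $a_\sigma$ takes one of the four values $n,\,n{-}1,\,1,\,0$ according to which cell of the partition $\{|\Phi^{(0)}|\cap|\Phi^{(n)}|,\,|\Phi^{(0)}|\setminus|\Phi^{(n)}|,\,|\Phi^{(n)}|\setminus|\Phi^{(0)}|,\,\text{complement}\}$ contains $\sigma$. A direct bookkeeping of the Hodge types $(p,q)$ occurring in $(V^{(0)}_\sigma)^{\otimes(1-k)}\otimes\bigwedge^k V^{(1)}_\sigma$ then shows in each case that only $(-1,0)$ and $(0,-1)$ appear (this yields (1)), and produces the dimension of $V^{(k),-1,0}_\sigma$ as $\binom{n}{k},\ \binom{n-1}{k},\ \binom{n-1}{k-1},\ 0$ respectively. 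Weighting by $\sigma(x)$ and using Pascal's identity $\binom{n}{k}=\binom{n-1}{k}+\binom{n-1}{k-1}$ in the first case gives \eqref{vzwei}.

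For claim (2) the only nonformal points are the $(L,*)$-skew-Hermitianness of $\psi^{(k)}$ and the positivity of $\psi^{(k)}(\cdot,h^{(k)}(i)\cdot)$. Skew-Hermitianness reduces to that of $\Psi^{(0)}$ and $\Psi^{(1)}$: swapping arguments of $\Psi^{(k)}$ transposes the $k\times k$ matrix of $\Psi^{(1)}$-values and applies $-*$ to each entry, yielding a sign $(-1)^k$ from the determinant together with $(-1)^{1-k}$ from the $\Psi^{(0)}$-factor, for a net sign $-$ under $*$. Rosati-compatibility of $h^{(k)}$ and positive definiteness are tested on simple decomposable vectors, where they reduce to the corresponding input conditions on $h^{(0)}$ and $h^{(1)}$. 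The direct sum datum is then a PEL datum by functoriality of direct sums.

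Claim (5) is immediate from the defining formula \eqref{vsechs}, which is precisely the formula in item~(3) of the preceding list: substituting $\gamma^{(j)}=h^{(j)}(z)$ gives the action of $h^{(k)}(z)$. Claim (4) then follows from (5): the associated Hodge cocharacter satisfies $g^{(k)}_\c\circ\mu^{(0\times 1)}=\mu^{(k)}$, so any Galois element over $\q$ preserving the conjugacy class of $\mu^{(0\times 1)}$ also preserves that of $\mu^{(k)}$, whence $E^{(k)}\subset E^{(0\times 1)}$. The main obstacle is the four-fold case analysis underlying (1) and (3); although elementary, it requires careful tracking of the invertible factor $V^{(0)\otimes(1-k)}$ and of which pieces of $\bigwedge^k V^{(1)}_\sigma$ contribute to each Hodge type.
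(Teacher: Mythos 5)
Your proposal is correct and takes essentially the same route as the paper: the same four-fold partition of the embeddings $\sigma$, the same bookkeeping of Hodge types in $(V_\sigma^{(0)})^{\otimes 1-k}\otimes\bigwedge^k V_\sigma^{(1)}$ giving type $\{(-1,0),(0,-1)\}$ and the trace formula \eqref{vzwei}, the same reduction of positivity to $\Psi^{(0)},\Psi^{(1)}$ via the Gram-determinant formula, with the skew-Hermitian check, claim (5) from \eqref{vsechs}, and the reflex-field inclusion treated as the formal consequences the paper also leaves essentially implicit. One small wording caution: positive definiteness cannot literally be "tested on decomposable vectors" (positivity on a spanning set does not imply positive definiteness of a Hermitian form); as in the paper's argument one should pass to an $E_\sigma^{(1)}$-orthonormal basis, whose induced decomposable basis of $\bigwedge^k V_\sigma^{(1)}$ is orthonormal for the determinant form, which closes the step immediately.
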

\begin{proof}
We first check that the Hodge structure $V^{(k)}$ has weights in the set 
$\{(-1,0),(0,-1)\}$. The assumption ~\eqref{veins} on $V^{(1)}$ implies that 
the ${V_\sigma^{(1)}}^{-1,0}$ have the following dimensions, where 
$\sigma:L\rightarrow\c$ runs through all embeddings:
$$
\dm_\c{V_\sigma^{(1)}}^{-1,0}=
\begin{cases}
n&\text{ if $\sigma\in|\Phi^{(0)}|\cap|\Phi^{(n)}|$}\\
n-1&\text{ if $\sigma\in|\Phi^{(0)}|-|\Phi^{(n)}|$}\\
1&\text{ if $\sigma\in|\Phi^{(n)}|-|\Phi^{(0)}|$}\\
0&\text{ if $\sigma\notin|\Phi^{(0)}|\cup|\Phi^{(n)}|$}
\end{cases},
$$
and analogous formulas for $\dm_\c{V_\sigma^{(1)}}^{0,-1}$ hold. Consequently 
we derive the following Hodge decomposition for the $\sigma$-eigenspace of 
$\bigwedge_{\O_L}^kV^{(1)}$:
$$
\bigwedge_\c^kV_\sigma^{(1)}=
\begin{cases}
\bigwedge_\c^k{V_\sigma^{(1)}}^{-1,0}
&\text{if $\sigma\in|\Phi^{(0)}|\cap|\Phi^{(n)}|$}\\
\bigwedge_\c^k{V_\sigma^{(1)}}^{-1,0}\oplus
\bigwedge_\c^{k-1}{V_\sigma^{(1)}}^{-1,0}\otimes_\c{V_\sigma^{(1)}}^{0,-1}
&\text{if $\sigma\in|\Phi^{(0)}|-|\Phi^{(n)}|$}\\
{V_\sigma^{(1)}}^{-1,0}\otimes_\c\bigwedge_\c^{k-1}{V_\sigma^{(1)}}^{0,-1}
\oplus\bigwedge_\c^k{V_\sigma^{(1)}}^{0,-1}
&\text{if $\sigma\in|\Phi^{(n)}|-|\Phi^{(0)}|$}\\
\bigwedge_\c^k{V_\sigma^{(1)}}^{0,-1}
&\text{if $\sigma\notin|\Phi^{(0)}|\cup|\Phi^{(n)}|$}
\end{cases}
$$
so that in each of the four cases the $\sigma$-eigenspace of 
$\bigwedge_{\O_L}^kV^{(1)}$ has the following Hodge weights (with
multiplicity):
$$
(p,q)=
\begin{cases}
{\binom{n}{k}\times(-k,0)}
&\text{if $\sigma\in|\Phi^{(0)}|\cap|\Phi^{(n)}|$}\\
{\binom{n-1}{k}\times(-k,0), 
\binom{n-1}{k-1}\times(1-k,-1)}
&\text{if $\sigma\in|\Phi^{(0)}|-|\Phi^{(n)}|$}\\
{\binom{n-1}{k-1}\times(-1,1-k), 
\binom{n-1}{k}\times(0,-k)}
&\text{if $\sigma\in|\Phi^{(n)}|-|\Phi^{(0)}|$}\\
{\binom{n}{k}\times(0,-k)}
&\text{if $\sigma\notin|\Phi^{(0)}|\cup|\Phi^{(n)}|$}
\end{cases}
$$
Comparing this with the Hodge weights of ${V_\sigma^{(0)}}^{\otimes_\c1-k}$, 
being
$$
(p,q)=
\begin{cases}
(k-1,0)&\text{if $\sigma\in|\Phi^{(0)}|$}\\
(0,k-1)&\text{if $\sigma\notin|\Phi^{(0)}|$}
\end{cases}
$$
shows that the sole weights of $V^{(k)}$ are $\{(-1,0),(0,-1)\}$ and shows the 
formula ~\eqref{vzwei} also.\\
We still have to verify that the pairings $\psi^{(k)}(x,h^{(k)}(i)y)$ are 
positive definite. Every embedding $\sigma:L\rightarrow\c$ is a 
$*$-homomorphism, therefore one obtains sesquilinear $\c$-valued pairings 
$\Psi_\sigma^{(k)}$ on $V_\sigma^{(k)}$ for every $k$. Consider the forms 
$E_\sigma^{(k)}=\Psi_\sigma^{(k)}(x,h^{(k)}(i)y)$. By assumption 
$\psi^{(1)}(x,h^{(1)}(i)y)$, and $\psi^{(0)}(x,h^{(0)}(i)y)$ are positive 
definite on $V_\r^{(1)}$, and $V_\r^{(0)}$, so these forms restrict to 
positive definite forms $E_\sigma^{(1)}$, and $E_\sigma^{(0)}$ on 
$V_\sigma^{(1)}$, and $V_\sigma^{(0)}$. By utilizing the formulas for 
$\Psi^{(k)}$ and $h^{(k)}$ we see the positive definiteness of 
$E_\sigma^{(k)}$, as
$$
E_\sigma^{(k)}(x_0^{1-k}x_1\wedge\dots\wedge x_k,
y_0^{1-k}y_1\wedge\dots\wedge y_k)
=E_\sigma^{(0)}(x_0,y_0)^{1-k}\det(E_\sigma^{(1)}(x_i,y_j)_{i,j}).
$$
Finally just check that $\psi^{(k)}(x,h^{(k)}(i)y)$ is the sum of various
$E_\sigma^{(k)}(x,y)$'s.
\end{proof}
We turn to Shimura varieties which correspond to our choice of
$V_\q^\delta,\psi^\delta,h^\delta$, for 
$\delta\in\{(0\times1),(0),\dots,(n)\}$. Let $X^\delta$ be the conjugacy 
class of $h^\delta$ in $G_\r^\delta$, (we set 
$h^{(0\times1)}=h^{(0)}\oplus h^{(1)}$), then one puts according to 
~\cite{deligne1}: 
$$
{_KM}_\c(G^\delta,X^\delta)=G^\delta(\q)\backslash
(X^\delta\times G^\delta(\ad)/K)
$$
for the Shimura varieties, of which the projective limit
$\lim\limits_{K\to1}{_KM}_\c(G^\delta,X^\delta)$ is equal to
$$
M_\c(G^\delta,X^\delta)=
G^\delta(\q)\backslash(X^\delta\times G^\delta(\ad)),
$$
by ~\cite[Corollaire 2.1.11]{deligne3}. One writes ${_KM}(G^\delta,X^\delta)$ 
and $M(G^\delta,X^\delta)$, for their weakly canonical models over a choice 
of any field containing the reflex. We will prefer to always work over the 
field $E^{(0\times1)}$, being the largest of the reflex fields $E^\delta$, 
thus our $M(G^{(k)},X^{(k)})$'s are strictly speaking base changes via 
$\times_{E^{(k)}}E^{(0\times1)}$ of the canonical models.\\
Let us briefly sketch the moduli interpretations due to Deligne 
~\cite[Scholie 4.11]{deligne1}. If $F/E^{(0\times1)}$ is an algebraically 
closed field then there is a 
$\Aut_{E^{(0\times1)}}(F)\times G^{(k)}(\ad)$-equivariant bijection between 
$F$-valued points of $M(G^{(k)},X^{(k)})$ and isogeny classes of quadruples 
$(Y^{(k)},\iota^{(k)},\lambda^{(k)},\eta^{(k)})$ such that:
\begin{itemize}
\item[(a)]
$Y^{(k)}$ is an abelian variety over $F$ up to isogeny.
\item[(b)]
$\iota^{(k)}:L\rightarrow\End^0(Y^{(k)})$ is a homomorphism such that
$$
\tr_{\Lie Y^{(k)}}(\iota^{(k)}(x))=\Phi^{(k)}(x)
$$ 
for all $x\in L$.
\item[(c)]
$\lambda^{(k)}$ is a homogeneous polarization of $Y^{(k)}$ of which
the Rosati involution restricts to the $CM$ involution on $L$.
\item[(d)]
a $L$-linear level structure 
$\eta^{(k)}:{V_\ad}^{(k)}\rightarrow H_1(Y^{(k)},\ad)$ which becomes a 
symplectic similitude if one imposes the Weil pairing on the right and the 
pairing $\psi^{(k)}$ on the left.
\item[(e)]
the skew-Hermitian $L$-module $H_1(Y^{(k)}\times_F\c,\q)$ is isomorphic to
$(V_\q^{(k)},\psi^{(k)})$
\end{itemize}
and similarly for $M(G^{(0\times1)},X^{(0\times1)})$. In fact a more thorough
treatment can be found in ~\cite{kottwitz}: The scheme 
$M(G^{(0\times1)},X^{(0\times1)})$ represents a functor taking a 
$E^{(0\times1)}$-scheme $S$ to the set of tuples 
$(Y^{(k)},\iota^{(k)},\lambda^{(k)},\eta^{(k)})$ with properties analogous to 
(a)-(e). From now on let us pick a level $l$ which is $\geq3$. The compact 
open groups 
\begin{equation}
\label{co}
K^\delta=\{\gamma\in G_\z^\delta(\zd)|\gamma\equiv1\pmod l\}
\end{equation}
are neat and satisfy $g^{(k)}(K^{(k)})\subset K^{(0\times1)}$. This is most 
useful as it sets up maps between the characteristic zero Shimura varieties
\begin{equation}
\label{velf}
g^{(k)}:{_{K^{(0\times1)}}M}(G^{(0\times1)},X^{(0\times1)}) 
\rightarrow{_{K^{(k)}}M}(G^{(k)},X^{(k)})
\end{equation}
that are induced from the group theoretic maps of lemma ~\ref{DATA}, 
according to ~\cite[Corollaire 5.4]{deligne1}. Note that these maps do not 
have a natural moduli interpretation. Nevertheless we do obtain a
homogeneously polarized abelian scheme $Y^{(k)}$ up to isogeny over 
${_{K^{(0\times1)}}M}(G^{(0\times1)},X^{(0\times1)})$, with $\O_L$-operation
$\iota^{(k)}$, and appropriate level structure $\eta^{(k)}$, by pulling back 
the universal family on ${_{K^{(k)}}M}(G^{(k)},X^{(k)})$ via $g^{(k)}$. Here, 
we wish to consider the particular representative of the isogeny class 
$Y^{(k)}$ which is determined by the constraint 
$\eta^{(k)}(V_{\zd}^{(k)})=H_1(Y_\xi^{(k)},\zd)$. Within the homogeneous class
of polarizations $\lambda^{(k)}$, coming from the data $\q\psi^{(k)}$, we wish 
to pick some effective representative and write $d_k$ for its degree. Note 
that the homogeneous class $\q\psi^{(k)}$ depends only on the homogeneous 
class $\q(\psi^{(0)}\oplus\psi^{(1)})$ of polarizations on 
$V^{(0)}\oplus V^{(1)}$, but the choice of an effective representative is 
arbitrary. Note also that $\psi^{(k)}$ may well be ineffective even if 
$\psi^{(0)}\oplus\psi^{(1)}$ is effective, see lemma ~\ref{prinz} below, 
however. Having made the above choice we obtain a further map:
\begin{equation}
\label{fuenf}
g^{(k)}:{_{K^{(0\times1)}}M}(G^{(0\times1)},X^{(0\times1)}) 
\rightarrow\A_{g_k,d_k,l}
\end{equation}
where $g_k$ is $[L^+:\q]\binom{n}{k}$, and $\A_{g_k,d_k,l}$ is the fine moduli 
space of polarized abelian $g_k$-folds of degree $d_k$ with level 
$l$-structure. We finish this section with two more results on $Y^{(k)}$: 
\begin{lem}
\label{vvier}
Let $V^\delta,h^\delta,\psi^\delta,K^\delta$, be as above. Let 
$F/E^{(0\times1)}$ be a field and let 
$$
\xi:\Spec F\rightarrow{_{K^{(0\times1)}}M}(G^{(0\times1)},X^{(0\times1)}) 
$$
be a point, let $Y_\xi^{(k)}/F$ be the abelian varieties which correspond to 
$g^{(k)}(\xi)$ via (a)-(e) above (and leveled by the constraint 
$\eta^{(k)}(V_{\zd}^{(k)})=H_1(Y_\xi^{(k)},\zd)$). Then:
$$
H_1(Y_\xi^{(0)}\times_FF^{ac},\z_\ell)^{\otimes_{\O_{L_\ell}}1-k}
\otimes_{\O_{L_\ell}}
\bigwedge_{\O_{L_\ell}}^kH_1(Y_\xi^{(1)}\times_FF^{ac},\z_\ell)
$$
is isomorphic to
$$
H_1(Y_\xi^{(k)}\times_FF^{ac},\z_\ell)
$$
as a $\O_{L_\ell}[\Gal(F^{ac}/F)]$-module.
\end{lem}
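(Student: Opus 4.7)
The plan is to exploit the level structures provided by the moduli interpretations (a)--(e) of $M(G^{(0\times1)},X^{(0\times1)})$ and of each $M(G^{(k)},X^{(k)})$. Restricting $\eta^{(k)}$ to its $\ell$-component produces, for every $k$, an $\O_{L_\ell}$-linear isomorphism
\[
\eta^{(k)}_\ell\colon V^{(k)}_{\z_\ell}\xrightarrow{\sim} H_1(Y^{(k)}_\xi\times_F F^{ac},\z_\ell),
\]
and the normalization $\eta^{(k)}(V^{(k)}_{\zd})=H_1(Y^{(k)}_\xi,\zd)$ pins down a specific integral representative of the isogeny class. Because $\eta^{(k)}$ is defined over $F$, each $\eta^{(k)}_\ell$ is $\Gal(F^{ac}/F)$-equivariant when the source $V^{(k)}_{\z_\ell}$ is equipped with the trivial Galois action.

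The heart of the argument is to identify $\eta^{(k)}_\ell$ with the tensor-wedge of $\eta^{(0)}_\ell$ and $\eta^{(1)}_\ell$. The map $g^{(k)}$ between Shimura varieties arises, via Deligne's functoriality, from the group homomorphism $g^{(k)}\colon G^{(0\times1)}\to G^{(k)}$ whose action on $V^{(k)}$ is given by formula \eqref{vsechs}, i.e.\ exactly the tensor-wedge operation under the definition $V^{(k)}=(V^{(0)})^{\otimes_{\O_L}1-k}\otimes_{\O_L}\bigwedge_{\O_L}^{k}V^{(1)}$. Tracking this through the $PEL$-moduli interpretation of Kottwitz, one sees that the level structure at $g^{(k)}(\xi)$ is forced to coincide with $(\eta^{(0)})^{\otimes_{\O_L}1-k}\otimes_{\O_L}\bigwedge_{\O_L}^{k}\eta^{(1)}$. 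The integral normalizations are compatible because the lattice $V^{(k)}_\z$ is by definition the tensor-wedge of the lattices $V^{(0)}_\z$ and $V^{(1)}_\z$, so no scalar ambiguity is introduced by the choice of representative.

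Granting that identification, composing $\eta^{(k)}_\ell$ with the inverse of the $\ell$-component of $(\eta^{(0)})^{\otimes_{\O_L}1-k}\otimes_{\O_L}\bigwedge_{\O_L}^{k}\eta^{(1)}$ yields the desired $\O_{L_\ell}$-linear isomorphism between the two modules. Galois-equivariance is automatic, since both factors of the composition are Galois-equivariant and the intermediate module $V^{(k)}_{\z_\ell}$ carries the trivial Galois action. The main obstacle is therefore the middle step --- the identification of $\eta^{(k)}$ with the tensor-wedge construction --- which is precisely the naturality of the $PEL$-moduli interpretation with respect to the morphism of Shimura data defined at the group level by \eqref{vsechs}.
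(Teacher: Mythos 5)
Your middle step is in the right spirit --- the paper's proof is also ultimately an application of the group-level formula \eqref{vsechs} to the level structures --- but the way you handle the Galois action contains a genuine error. You assert that, because $\eta^{(k)}$ ``is defined over $F$'', the map $\eta^{(k)}_\ell\colon V^{(k)}_{\z_\ell}\to H_1(Y^{(k)}_\xi\times_F F^{ac},\z_\ell)$ is $\Gal(F^{ac}/F)$-equivariant for the \emph{trivial} action on the source. This is false: in the moduli interpretation the level datum is only a $K^{(k)}$-orbit of isomorphisms (a $\pi_1$-invariant class), not a single Galois-invariant isomorphism. If a Galois-equivariant trivialization with trivial source action existed, the Galois representation on $H_1(Y^{(k)}_\xi\times_F F^{ac},\z_\ell)$ would itself be trivial, which is absurd in general --- and would also make the lemma vacuous, since both sides would then be trivial modules. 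So your final sentence ``Galois-equivariance is automatic'' does not follow; the whole content of the lemma is precisely to compare two \emph{nontrivial} Galois actions, and your argument never gets hold of them.

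The paper's mechanism, which repairs exactly this point, is the following. Lift $\xi$ to a point $\xi'$ of the infinite-level variety $M(G^{(0\times1)},X^{(0\times1)})$ over $F^{ac}$, with honest level structures $\eta^{(0)},\eta^{(1)}$. For $\tau\in\Gal(F^{ac}/F)$, since $\xi'$ and $\tau(\xi')$ have the same image at level $K^{(0\times1)}$, one has $\tau(\xi')=\xi'\cdot\gamma$ for some $\gamma=(\gamma^{(0)},\gamma^{(1)})\in K^{(0\times1)}$; concretely, the Galois action on the Tate modules, transported through $\eta^{(0)},\eta^{(1)}$, is given by these elements $\gamma^{(k)}$. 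The $G^{(0\times1)}(\ad)$-equivariance of the infinite-level map to $M(G^{(k)},X^{(k)})$ then gives $\tau(g^{(k)}(\xi'))=g^{(k)}(\xi')\cdot g^{(k)}(\gamma)$, so the Galois action on $H_1(Y^{(k)}_\xi\times_F F^{ac},\z_\ell)$, read through $\eta^{(k)}$, is by $g^{(k)}(\gamma)$, i.e.\ by the tensor-wedge formula \eqref{vsechs} applied to the actions on $H_1(Y^{(0)}_\xi)$ and $H_1(Y^{(1)}_\xi)$; this is what makes the composite isomorphism Galois-equivariant. Note also that your appeal to ``tracking through the PEL-moduli interpretation'' at finite level cannot be taken literally: the paper points out that the maps \eqref{velf} have no natural moduli interpretation, and the identification of the image level structure with the tensor-wedge is made at infinite level over $F^{ac}$, where the map is defined purely group-theoretically; the Hecke equivariance is then exactly the tool that controls the descent data over $F$.
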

\begin{proof}
We can lift $\xi$ to a point $\xi':\Spec F^{ac}\rightarrow 
M(G^{(0\times1)},X^{(0\times1)})$, using the moduli interpretation of 
$M^{(0\times1)}$ (see section ~\ref{good} below) we can find the associated 
tuple 
$$
(Y_\xi^{(1)},Y_\xi^{(0)},\iota^{(1)},\iota^{(0)},
\lambda^{(0\times1)},\eta^{(1)},\eta^{(0)})
$$
where $\lambda^{(0\times1)}$ is a homogeneous polarization of 
$Y_\xi^{(0)}\times Y_\xi^{(1)}$ and $\eta^{(0)},\eta^{(1)}$ are 
$L$-linear similitudes:
$$
\eta^{(1)}:{V_{\ad}}^{(1)}\rightarrow 
H_1(Y_\xi^{(1)},\ad).
$$
and
$$
\eta^{(0)}:{V_{\ad}}^{(0)}\rightarrow 
H_1(Y_\xi^{(0)},\ad)
$$
of which the multipliers agree. Now let $\tau$ be an element of 
$\Gal(F^{ac}/F)$, as $\xi'$ and $\tau(\xi')$ have the same image in 
${_{K^{(0\times1)}}M}(G^{(0\times1)},X^{(0\times1)})$ one can find 
$\gamma=(\gamma^{(0)},\gamma^{(1)})\in K^{(0\times1)}$ with 
$\tau(\xi')=\xi'.\gamma$. Due to the $G^{(0\times1)}(\ad)$-equivariance of the
map $M(G^{(0\times1)},X^{(0\times1)})\rightarrow M(G^{(k)},X^{(k)})$ it 
follows that $\tau(g^{(k)}(\xi'))=g^{(k)}(\xi').g^{(k)}(\gamma)$. However,
$\xi'.\gamma$ is by definition equal to: 
$$
(Y_\xi^{(1)},Y_\xi^{(0)},\iota^{(1)},\iota^{(0)},\lambda^{(0\times1)},
\eta^{(1)}\circ\gamma^{(1)},\eta^{(0)}\circ\gamma^{(0)}),
$$
and analogously for $g^{(k)}(\xi').g^{(k)}(\gamma)$. Now use the formula 
~\eqref{vsechs} and one is done.
\end{proof}

\begin{cor}
If the assumptions are as above, then:
$$
H_1^{dR}(Y_\xi^{(0)}/F)^{\otimes_{F\otimes L}1-k}
\otimes_{F\otimes L}\bigwedge_{F\otimes L}^kH_1^{dR}(Y_\xi^{(1)}/F)
$$
is isomorphic to
$$
H_1^{dR}(Y_\xi^{(k)}/F)
$$
as $F\otimes L$-module with Hodge filtration and Gauss-Manin connection.
\end{cor}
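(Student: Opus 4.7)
The plan is to reduce the de Rham assertion to the tensor identity built into the construction of $V^{(k)}$, by passing through the Betti--de Rham comparison over $\c$. The tensor construction encoded in formula \eqref{vsechs} yields, via functoriality of de Rham homology and the moduli interpretation of $g^{(k)}$, a canonical $F\otimes L$-linear morphism
\begin{equation*}
\alpha : H_1^{dR}(Y_\xi^{(0)}/F)^{\otimes_{F\otimes L}(1-k)}
\otimes_{F\otimes L}\bigwedge_{F\otimes L}^k H_1^{dR}(Y_\xi^{(1)}/F)
\longrightarrow H_1^{dR}(Y_\xi^{(k)}/F),
\end{equation*}
which arises by restriction at $\xi$ from a morphism of relative de Rham homology bundles on the universal Shimura variety ${_{K^{(0\times1)}}M}(G^{(0\times1)},X^{(0\times1)})/E^{(0\times1)}$. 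In particular $\alpha$ is automatically compatible with the Hodge filtration and with the Gauss--Manin connection.

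To see that $\alpha$ is an isomorphism, first replace $F$ by a finitely generated subextension $F_0/E^{(0\times1)}$ of $F$ over which $\xi$ is already defined, and then fix an embedding $F_0\hookrightarrow\c$. Assertion (e) of the moduli interpretation identifies the Betti homology $H_1(Y_\xi^{(k)}\times_F\c,\q)$ with $V_\q^{(k)}$ as a polarized $L$-Hodge structure, for each $k\in\{0,1,\dots,n\}$. The definition of $V^{(k)}$ as a tensor construction therefore provides the required tensor identification on Betti realizations, compatibly with Hodge structure. The comparison isomorphism between Betti homology and $H_1^{dR}$ over $\c$ preserves Hodge filtrations and turns the constant Betti local system into the flat Gauss--Manin bundle, so $\alpha\otimes_{F_0}\c$ is an isomorphism of $\c\otimes L$-modules with all the additional structure. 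Faithfully flat descent through $F_0\hookrightarrow\c$ and through $F_0\hookrightarrow F$ then shows that $\alpha$ itself is an isomorphism.

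The only real content lies in the Betti step, but there it is essentially tautological once one recalls the Hodge-theoretic tensor identification verified in Lemma \ref{DATA}. I do not foresee a serious obstacle: the delicate points are merely bookkeeping, namely keeping track of the $L$-action, of the polarization-derived pairings, and of the simultaneous compatibility with Hodge filtration and connection through the comparison isomorphism.
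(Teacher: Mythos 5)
There is a genuine gap, and it sits exactly where you declare the map to exist ``canonically.'' You build $\alpha$ by ``functoriality of de Rham homology and the moduli interpretation of $g^{(k)}$,'' restricted from a morphism of relative de Rham bundles over the Shimura variety defined over $E^{(0\times1)}$. But the paper points out explicitly that the maps $g^{(k)}$ of \eqref{velf} have \emph{no} natural moduli interpretation: $Y^{(k)}$ is not obtained from $Y^{(0)},Y^{(1)}$ by any algebraic construction on abelian schemes (there is no ``exterior power of an abelian variety''), so there is no morphism of families from which a de Rham comparison over $F$, let alone over the reflex field, could be extracted by functoriality. The only thing the construction gives for free is the isomorphism $t^{(k)}$ of Betti realizations after an embedding $F\hookrightarrow\c$. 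The entire content of the corollary is that the de Rham avatar of this transcendentally defined class is rational over $F$ and horizontal for the Gauss--Manin connection; your proposal assumes this at the outset (``automatically compatible with the Hodge filtration and with the Gauss--Manin connection'') and then only verifies the comparatively easy fact that a map already defined over $F$ which becomes an isomorphism over $\c$ is an isomorphism.

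The paper's route shows what is actually needed to fill this: one regards the pair $(t_{dR}^{(k)},t_{et}^{(k)})$ as a Hodge cycle on the product abelian variety ${Y_\xi^{(0)}}^{\times_F k-1}\times_F Y_\xi^{(1)}\times_F Y_\xi^{(k)}$, invokes Deligne's theorem that Hodge cycles on abelian varieties are absolute Hodge (\cite[Theorem 2.11]{deligne2}) to get rationality over $F^{ac}$ and horizontality of $t_{dR}^{(k)}$ (\cite[Proposition 2.5, Corollary 2.7]{deligne2}), and then uses the fact that the stabilizers in $\Gal(F^{ac}/F)$ of the de Rham and \'etale components agree (\cite[Proposition 2.9]{deligne2}) together with Lemma~\ref{vvier} --- which is proved by a group-theoretic argument on the Shimura variety, not by functoriality of any family --- to conclude that $t_{et}^{(k)}$, hence $t_{dR}^{(k)}$, descends to $F$. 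Without an input of this strength (absolute Hodge cycles, or an equivalent rationality theorem for automorphic vector bundles), your faithfully flat descent step has nothing defined over $F$ to descend.
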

\begin{proof}
Without loss of generality one can assume that $F$ is a finitely generated 
extension of $E^{(0\times1)}$. We choose an embedding of $F$ into $\c$. By 
very construction of the map $g^{(k)}:{_{K^{(0\times1)}}M}(G^{(0\times1)},
X^{(0\times1)})\rightarrow{_{K^{(k)}}M}(G^{(k)},X^{(k)})$ there are 
isomorphisms of Hodge structures with $\O_L$-operation:
$$
\begin{CD}
H_1(Y_\xi^{(0)}(\c),\z)^{\otimes_{\O_L}1-k}
\otimes_{\O_L}\bigwedge_{\O_L}^kH_1(Y_\xi^{(1)}(\c),\z)\\
@V{t^{(k)}}VV\\
H_1(Y_\xi^{(k)}(\c),\z)
\end{CD}
$$
Let us denote by $t_{dR}^{(k)}$ and $t_{et}^{(k)}$ the de Rham and 
\'etale realizations which are obtained from $t^{(k)}$ by composing with
the comparison isomorphisms:
$$
H_1^{dR}(Y_\xi^{(k)}/\c)\cong H_1(Y_\xi^{(k)}(\c),\c)
$$
and
$$
H_1(Y_\xi^{(k)}\times_FF^{ac},\zd)\cong H_1(Y_\xi^{(k)}(\c),\zd).
$$
The pairs $(t_{dR}^{(k)},t_{et}^{(k)})$ are Hodge cycles (on the abelian 
variety ${Y_\xi^{(0)}}^{\times_Fk-1}\times_FY_\xi^{(1)}\times_FY_\xi^{(k)}$)
in the sense of ~\cite[Paragraph 2]{deligne2}. By 
~\cite[Theorem 2.11]{deligne2} these are absolute Hodge cycles (on the 
aforementioned product of abelian varieties over $F^{ac}$). By 
~\cite[Proposition 2.5]{deligne2} this implies that the de Rham components 
$t_{dR}^{(k)}$ are horizontal with respect to the Gauss-Manin connection, in 
particular they descend to isomorphisms over $F^{ac}$:
$$
\begin{CD}
H_1^{dR}(Y_\xi^{(0)}/F^{ac})^{\otimes_{F^{ac}\otimes L}1-k}
\otimes_{F^{ac}\otimes L}
\bigwedge_{F^{ac}\otimes L}^kH_1^{dR}(Y_\xi^{(1)}/F^{ac})\\
@V{t_{dR}^{(k)}}VV\\
H_1^{dR}(Y_\xi^{(k)}/F^{ac}),
\end{CD}
$$ 
according to ~\cite[Corollary 2.7]{deligne2}. Moreover, the subgroup of 
$\Gal(F^{ac}/F)$ fixing $t_{dR}^{(k)}$ is the same as the subgroup of 
$\Gal(F^{ac}/F)$ fixing $t_{et}^{(k)}$, essentially because Galois conjugates 
of absolute Hodge cycles are again absolute Hodge cycles, 
~\cite[Proposition 2.9]{deligne2}. Lemma ~\ref{vvier} finishes our proof as 
$t_{et}^{(k)}$, is indeed rational over $F$, so that our horizontal map 
$t_{dR}^{(k)}$ descends to $F$:
$$
\begin{CD}
H_1^{dR}(Y_\xi^{(0)}/F)^{\otimes_{F\otimes L}1-k}
\otimes_{F\otimes L}\bigwedge_{F\otimes L}^kH_1^{dR}(Y_\xi^{(1)}/F)\\
@V{t_{dR}^{(k)}}VV\\
H_1^{dR}(Y_\xi^{(k)}/F)
\end{CD}
$$
too.
\end{proof}

\begin{rem}
As we can identify $G^{(0)}$ with a subgroup of $G^{(1)}$, we can identify 
$G^{(0\times1)}$ with the product $G^{(0)}\times G$ where $G$ is the kernel 
of the map $g^{(0)}:G^{(0\times1)}\rightarrow G^{(0)}$. We even get a product 
of Shimura data 
$(G^{(0\times1)},X^{(0\times1)})\cong(G^{(0)},X^{(0)})\times(G,X)$, where $X$ 
is the projection of $X^{(0\times1)}$ onto the $G$-factor.\\ 
Notice however, that $(G,X)$ is not the type of Shimura datum, that can arise 
from any $PEL$-datum, nor is it of Hodge type. This is because its weight 
homomorphism is trivial. In fact one can think of $M_\c(G,X)$ as a moduli 
space for the Hodge structures 
${V^{(0)}}^{\otimes_{\O_L}-1}\otimes_{\O_L}V^{(1)}$ which have weight zero.
\end{rem}

\section{Deformations of ordinary points}
\label{deform}
\subsection{Canonical Coordinates}
\label{caco}
In this subsection $L$ is a finite extension of $\q_p$, let $\O_L$ be its ring
of integers, $\gp$ its maximal ideal, and $\f=\O_L/\gp$ its residue field. We 
need some background material on Barsotti-Tate groups with $\O_L$-operation. 
Let $R$ be a noetherian local $\O_L$-algebra, complete with respect to the 
maximal ideal $\gm$. Assume that the residue field $k=R/\gm$ is an 
algebraically closed field extension of $\f$. Let $\G$ be a Barsotti-Tate 
group over $R$. Let $\iota:\O_L\rightarrow\End(\G)$ be a homomorphism. The 
pair $(\G,\iota)$ is called a $\O_L$-Barsotti-Tate group, if $\O_L$ acts on 
$\Lie \G$ by means of the structural morphism $\O_L\rightarrow R$, see 
~\cite{messing}, for the definition of $\Lie\G$. A few numerical invariants 
are noteworthy: the $\O_L$-height $\ho_{\O_L}\G$ of $\G$ is defined by 
$|\G[\gp]|=|\O_L/\gp|^{\ho_{\O_L}\G}$, and the dimension of $\G$ is the 
rank of the projective module $\Lie\G$. Every $\O_L$-Barsotti-Tate group sits 
in a unique exact sequence:
$$
\begin{CD}
0@>>>\G^\circ@>>>\G@>>>\G^{et}@>>>0
\end{CD}
$$
where $\G^\circ$ is connected and $\G^{et}$ is \'etale. One calls $\G$ 
ordinary if and only if $\dm\G={\ho}_{\O_L}\G^\circ$, in general one has an 
inequality $\dm\G\leq{\ho}_{\O_L}\G^\circ$. It follows from the 
Dieudonn\'e-Manin classification that over an algebraically closed field there 
is one and only one ordinary $\O_L$-Barsotti-Tate group of say dimension $a$ 
and $\O_L$-height $a+b$, cf. ~\cite[(29.8)]{hazewinkel}. Also, by the 
rigidity of \'etale covers there is one and only one \'etale 
$\O_L$-Barsotti-Tate group of given $\O_L$-height over any $R$. In fact the 
same is true at the other extreme:

\begin{lem}
\label{starr}
Let $R$ be as above and let $a$ be an integer. The category of 
$\O_L$-Barsotti-Tate groups $\G$ over $R$ with $a=\dm\G=\ho_{\O_L}\G$, is 
equivalent to the category of $\O_L$-Barsotti-Tate groups $\G$ over $k=R/\gm$ 
with $a=\dm\G=\ho_{\O_L}\G$.
\end{lem}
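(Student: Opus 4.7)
The plan is to show that the reduction functor $\G \mapsto \G\otimes_R k$ is an equivalence of categories, by proving it essentially surjective and fully faithful. For essential surjectivity I first exhibit a canonical lift: by the Dieudonn\'e-Manin classification just cited, over the algebraically closed field $k$ there is up to isomorphism a unique $\O_L$-Barsotti-Tate group with $a=\dm\G_0=\ho_{\O_L}\G_0$, and it is isomorphic to $\operatorname{LT}^a_k$, where $\operatorname{LT}$ denotes the Lubin-Tate formal $\O_L$-module of dimension one and $\O_L$-height one. Because $\operatorname{LT}$ is canonically defined over $\O_L$ and $R$ is an $\O_L$-algebra, the group $\operatorname{LT}^a_R$ is a distinguished object of our subcategory reducing to $\G_0$. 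Note also that the hypothesis $\dm\G=\ho_{\O_L}\G$, combined with the inequality $\dm\G\le\ho_{\O_L}\G^\circ$ recorded above, forces $\G^{et}=0$, so every $\G$ in our subcategory is connected ordinary of full $\O_L$-height.

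For full faithfulness I would reduce to $R$ Artinian by Grothendieck's equivalence between Barsotti-Tate groups over $R$ and compatible systems over the Artinian quotients $R/\gm^n$, together with the fact that $\Hom$ commutes with the corresponding inverse limit. By induction on the length it then suffices to verify unique lifting of both objects and morphisms across a small surjection $R'\twoheadrightarrow R$ whose kernel $I$ satisfies $\gm_{R'} I=0$. For objects one invokes the $\O_L$-equivariant Serre-Tate theorem: lifts of an ordinary $\O_L$-Barsotti-Tate group from $R$ to $R'$ form a torsor under a $\Hom$-group built out of the $\O_L$-Tate module of the \'etale part $\G^{et}$, and since in our subcategory $\G=\G^\circ$ has no \'etale part this torsor is trivial, giving unique lifts of objects.

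For morphisms, crystalline Dieudonn\'e theory identifies maps of $\O_L$-Barsotti-Tate groups with $\O_L$-stable maps of Dieudonn\'e crystals that preserve Hodge filtrations; crystals are insensitive to the nilpotent thickening $R\hookrightarrow R'$, so the crystal map lifts uniquely, and compatibility of this lift with the Hodge filtrations over $R'$ is automatic from the uniqueness-of-objects statement for $\G$ and $\H$ established just above. The principal obstacle I expect is the $\O_L$-equivariant Serre-Tate input, since the classical theorem applies to bare Barsotti-Tate groups; however the equivariant form is standard in Rapoport-Zink theory and can also be proved directly via $\O_L$-equivariant Grothendieck-Messing, the key point being that in our subcategory the $\O_L$-action on $\Lie\G$ is required to factor through the structure map $\O_L\to R'$, which rigidifies the choice of Hodge filtration lift.
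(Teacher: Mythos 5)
Your proof is correct, and its decisive step is the same as the paper's: the paper reduces to a square-zero extension, notes that $D(\G_0)_R$ is free of rank $a$ over $R\otimes\O_L$, and observes that the requirement that $\O_L$ act on $\Lie\G$ through $\O_L\to R$ forces $\Lie\G$ to be a quotient of $D(\G_0)_R\otimes_{R\otimes\O_L}R\cong R^a$, hence by rank reasons equal to it; so the Hodge filtration, and with it the lift, is unique. That is precisely your ``rigidified filtration'' argument via $\O_L$-equivariant Grothendieck--Messing, so at the core the two proofs agree. What you do differently: you make essential surjectivity explicit through the canonical model $\Sigma^a$ base-changed to $R$ (the paper gets it implicitly from unique lifting along Artinian quotients), you treat morphisms separately via functoriality of the crystal (the paper leaves full faithfulness tacit in the same formalism) --- and here your claim that filtration compatibility is automatic is justified because the unique filtration is $\ker\bigl(D_{R'}\to D_{R'}\otimes_{R'\otimes\O_L}R'\bigr)$, which any $\O_L$-linear map of crystals preserves. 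Two cautions about your first-choice route for lifting objects: inside this paper the $\O_L$-equivariant Serre--Tate coordinates (lemma \ref{coordinate}) are \emph{deduced from} the present lemma, so invoking them here would be circular; and the classical, non-equivariant Serre--Tate theorem is genuinely unavailable, since the underlying Barsotti--Tate group of a group with $\dm\G=\ho_{\O_L}\G$ has all slopes equal to $1/[L:\q_p]$ and is therefore not ordinary in the classical sense once $L\neq\q_p$. Since you supply the direct Grothendieck--Messing argument as the fallback, the proof stands; I would only ask you to spell out the rank computation showing $R^a\twoheadrightarrow\Lie\G$ is an isomorphism, since that one line is the entire content of the rigidity.
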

\begin{proof}
Let $R$ be artinian, let $I\subset R$ be an ideal of square zero, let us 
endow it with the trivial divided power structure. Consider a  
$\O_L$-Barsotti-Tate group $\G_0$ over $R_0=R/I$ with associated crystal
$D(\G_0)$, see ~\cite{messing}. Notice that the value $D(\G_0)_R$ over $R$ is 
a free $R\otimes\O_L$-module of rank $a$. We have to show that $\G_0$ has a 
unique lift to a $\O_L$-Barsotti-Tate group $\G$ over $R$. So consider all 
sequences:
$$
\begin{CD}
0@>>>Fil^1@>>>D(\G_0)_R@>>>\Lie\G@>>>0
\end{CD}
$$
where the quotient $\Lie\G$ is a free $R$-module of rank $a$ on which
the $\O_L$ action factors through $\O_L\rightarrow R$. This last condition 
actually means that $\Lie\G$ is a quotient of the $R$-module 
$R\otimes_{\O_L}\O_L^a=R^a$. Due to rank reasons we then have equality.
\end{proof}

From this it follows easily that we have canonical coordinates for
ordinary $\O_L$-Barsotti-Tate groups, we write $\Sigma/\O_L$ 
for the Lubin-Tate one, $\Sd/\O_L$ for its formal group, by $\Sd(R)$ we
mean the set $\gm$ which is given a $\O_L$-module structure by the group 
law of $\Sd$.

\begin{lem}
\label{coordinate}
Let $k=R/\gm$ be as above and let $\G_0$ be an ordinary $\O_L$-Barsotti-Tate 
group over $k/\f$. Consider the $\O_L$-modules:
$$
T'=\Hom_{\O_L}(L/\O_L,\G_0)
$$
and
$$
T''=\Hom_{\O_L}(\Sigma\times_{\O_L}k,\G_0).
$$
There exists an equivalence of categories between lifts $\G/R$ of $\G_0$ and
maps $\phi\in\Hom_{\O_L}(T',T'')\otimes_{\O_L}\Sd(R)$, the equivalence 
being established by decreeing $\G$ to be the following push out:
$$\begin{CD}
0@>>>\G^\circ@>>>\G@>>>\G^{et}@>>>0\\
@AAA@A{\phi}AA@AAA@A=AA@AAA\\
0@>>>T'@>>>T'\otimes_{\O_L}L
@>>>T'\otimes_{\O_L}L/\O_L@>>>0\\
\end{CD},$$
where $\G^\circ=T''\otimes_{\O_L}\Sigma\times_{\O_L}R$.
\end{lem}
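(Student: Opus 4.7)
The strategy is to reduce the classification of lifts of $\G_0$ to an Ext computation by first lifting the connected and \'etale summands separately via Lemma~\ref{starr} and rigidity of \'etale covers, and then describing arbitrary lifts of $\G_0$ as extensions of the unique \'etale lift by the unique connected lift.

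Since $k$ is algebraically closed, $\G_0$ canonically decomposes as $\G_0^\circ\oplus\G_0^{et}$, with $\G_0^{et}$ determined by $T'$ and $\G_0^\circ\cong T''\otimes_{\O_L}\Sigma\times_{\O_L}k$. Rigidity of \'etale covers over the complete local ring $R$ gives a unique lift of $\G_0^{et}$, namely the constant $\O_L$-Barsotti-Tate group $T'\otimes_{\O_L}L/\O_L$. The connected summand is ordinary with $\dm=\ho_{\O_L}$, so Lemma~\ref{starr} produces the unique lift $T''\otimes_{\O_L}\Sigma\times_{\O_L}R$. Consequently every lift $\G/R$ sits in a canonical extension
$$
0\to T''\otimes_{\O_L}\Sigma\times_{\O_L}R\to\G\to T'\otimes_{\O_L}L/\O_L\to 0
$$
in the category of $\O_L$-Barsotti-Tate groups over $R$, and isomorphism classes of lifts are parametrised by the corresponding $\Ext^1$-group.

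To compute this Ext group I would apply $\Hom_R(-,\G^\circ)$ to the short exact sequence
$$
0\to T'\to T'\otimes_{\O_L}L\to T'\otimes_{\O_L}L/\O_L\to 0
$$
and use the vanishing of both $\Hom(T'\otimes_{\O_L}L,\G^\circ)$ and $\Ext^1(T'\otimes_{\O_L}L,\G^\circ)$: the source is uniquely $\gp$-divisible, while $\G^\circ(R)=T''\otimes_{\O_L}\Sd(R)$ is $\gp$-power torsion and sits inside $\gm$. The connecting homomorphism then supplies the isomorphism
$$
\Ext^1(T'\otimes_{\O_L}L/\O_L,\G^\circ)\cong\Hom_{\O_L}(T',\G^\circ(R))=\Hom_{\O_L}(T',T'')\otimes_{\O_L}\Sd(R),
$$
where the last identification uses that $T''$ is finite free over $\O_L$.

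It remains to verify that the push-out in the statement realises this boundary map, which is a direct diagram chase: given $\phi$, regarded as a homomorphism $T'\to\G^\circ(R)$, the push-out of the bottom row along $\phi$ is by construction the extension whose class equals $\phi$, and the assignment is functorial. The main obstacle I anticipate is the vanishing of $\Ext^1(T'\otimes_{\O_L}L,\G^\circ)$: this needs a genuine rigidity input rather than a formal manipulation. I would reduce to the artinian case and invoke Messing's crystalline deformation theory to lift extensions incrementally along square-zero (or divided-power) thickenings, thereby excluding non-trivial extensions of a uniquely $\gp$-divisible source by a formal group.
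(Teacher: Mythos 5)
Your reduction is the same as the paper's (which follows Katz): the connected part of any lift is identified by Lemma~\ref{starr}, the \'etale part by rigidity of \'etale covers, so lifting $\G_0$ amounts to computing extensions of $T'\otimes_{\O_L}L/\O_L$ by $\G^\circ=T''\otimes_{\O_L}\Sigma\times_{\O_L}R$. The divergence, and the gap, is in how you compute this $\Ext^1$. You hang surjectivity of the classification map on the vanishing of $\Ext^1(T'\otimes_{\O_L}L,\G^\circ)$, and the justification you sketch --- Messing's crystalline deformation theory applied along square-zero thickenings --- does not apply: $T'\otimes_{\O_L}L$ is a uniquely divisible constant ind-group, not a Barsotti-Tate group, so Messing's theory says nothing about extensions with this source; and even if you could reduce to the residue field, you would still have to control a ${\varprojlim}^1$-term over the tower of lattices $p^{-n}T'$ and extensions by the non-finitely generated constant group $T'$, none of which your sketch addresses. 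The input the paper actually uses is different and strictly weaker than your claimed vanishing: after reducing to artinian quotients of $R$, the rigidity of quasi-isogenies shows that the class $c$ of $0\to\G^\circ\to\G\to\G^{et}\to0$ is killed by some $p^n$; the long exact sequence attached to multiplication by $p^n$ on $T'\otimes_{\O_L}L/\O_L$ (whose terms are honest finite, resp.\ $p$-divisible, groups) then exhibits $c$ as induced from a unique element of $\Hom_{\O_L}(T'\otimes_{\O_L}p^{-n}\O_L/\O_L,\G^\circ)\cong\Hom_{\O_L}(T',T'')\otimes_{\O_L}\Sd(R)$, the uniqueness coming from $\Hom(T'\otimes_{\O_L}L/\O_L,\G^\circ)=0$, which follows from the same divisible-source-into-bounded-torsion argument you give. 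If you want to keep your sequence, note that torsionness of $c$ already suffices: the pullback of $c$ along $T'\otimes_{\O_L}L\to T'\otimes_{\O_L}L/\O_L$ is killed by $p^n$, and pulling back further along the automorphism ``multiplication by $p^n$'' of $T'\otimes_{\O_L}L$ shows it vanishes; the full vanishing of $\Ext^1(T'\otimes_{\O_L}L,\G^\circ)$ is never needed. So the missing idea is precisely the rigidity statement that makes the extension class torsion.

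A secondary point: your vanishing arguments use that $\G^\circ(R)=T''\otimes_{\O_L}\Sd(R)$ is $p$-power torsion, which is false for non-artinian $R$ (e.g.\ $R=\O_B$ in characteristic zero, where the Lubin--Tate group has many non-torsion points in $\gm$). You must make the reduction to artinian quotients of $R$ the first step of the whole proof --- as the paper does, using that Barsotti-Tate groups over $R$ are equivalent to compatible systems over the $R/\gm^j$ --- not merely invoke it at the final Ext step.
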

\begin{proof}
The proof is word for word the same as in ~\cite[Paragraph 2]{katz}, so we 
are brief. It is enough to consider lifts $\G$ over artinian $\O_L$-algebras 
$R$. According to lemma ~\ref{starr} the group $\G^\circ$ is canonically 
isomorphic to $T''\otimes_{\O_L}\Sigma\times_{\O_L}R$. Moreover, by the 
rigidity of quasi-isogenies the extension class of:
$$\begin{CD}
0@>>>T''\otimes_{\O_L}\Sigma\times_{\O_L}R@>>>
\G@>>>T'\otimes_{\O_L}L/\O_L@>>>0
\end{CD}$$
is torsion, say killed by $p^n$. It follows that every element in
$$
\Ext_{\O_L}^1(T'\otimes_{\O_L}L/\O_L,
T''\otimes_{\O_L}\Sigma\times_{\O_L}R)
$$
is induced from an element in
$$
\Hom_{\O_L}(T'\otimes_{\O_L}p^{-n}\O_L/\O_L,
T''\otimes_{\O_L}\Sigma\times_{\O_L}R)
$$
which is unique as
$$
\Hom_{\O_L}(T'\otimes_{\O_L}L/\O_L,
T''\otimes_{\O_L}\Sigma\times_{\O_L}R)=0.
$$
\end{proof}

In the sequel we will frequently use that the category of Barsotti-Tate 
groups over $R$ is equivalent to the category of projective systems of 
Barsotti-Tate groups over $R/M$ where $M$ runs through all $\gm$-primary 
ideals ~\cite[Chapter II, Lemma(4.16)]{messing}, in particular we can talk 
about the generic fibre of a deformation. In the ordinary case these 
can be obtained as follows:

\begin{lem}
\label{trivial}
Let $\G_0$ be an ordinary $\O_L$-Barsotti-Tate group over $k/\f$, with 
dimension $a$ and $\O_L$-height $a+b$. Let $T'$, $T''$, $R/\O_L$, and 
$\phi\in\Hom_{\O_L}(T',T'')\otimes_{\O_L}\Sd(R)$ with corresponding 
deformation $\G/R$ be as in lemma ~\ref{coordinate}. Let $K/L$ be a field 
extension together with a $\O_L$-linear homomorphism $R\rightarrow K$. Pick 
$\O_L$-bases $e_1',\dots,e_b'$ of $T'$, and $e_1'',\dots,e_a''$ of $T''$, and 
let $\phi_{i,j}\in\Sd(R)$ be the entries of the matrix which represents 
$\phi$. Let further $\G_{i,j}$ be the $\O_L$-Barsotti-Tate group over $R$ 
defined by the push out:
\begin{equation}
\label{ijott}
\begin{CD}
0@>>>\Sigma\times_{\O_L}R@>>>\G_{i,j}@>>>L/\O_L@>>>0\\
@AAA@A{\phi_{i,j}}AA@AAA@A=AA@AAA\\
0@>>>{\O_L}@>>>L@>>>L/\O_L@>>>0\\
\end{CD},
\end{equation}
and let $c_1:\Gal(L^{ac}/L)\rightarrow\O_L^\times$ be the character obtained 
from the Tate module $\O_L(1)$ of $\Sigma\times_{\O_L}L^{ac}$. Then the 
operation of $\Gal(K^{ac}/K)$ on the Tate module of $\G\times_RK^{ac}$ is:
$$
\left(\begin{matrix}
c_1E_a&C\\
0&E_b
\end{matrix}\right)
$$
where $C=(c_{i,j})$ is the $a\times b$-matrix such that 
$c_{i,j}:\Gal(K^{ac}/K)\rightarrow\O_L(1)$ is any $1$-cocycle representing the 
continuous cohomology class in $H_{cont}^1(\Gal(K^{ac}/K,\O_L(1))$ which is 
obtained from the extension ~\eqref{ijott}.
\end{lem}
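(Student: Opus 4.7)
The plan is to compute the Galois representation on $T_p(\G \times_R K^{ac})$ by applying the Tate-module functor to the push-out sequence of Lemma \ref{coordinate} and extracting the resulting cocycle matrix from the extension of Galois modules.

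First, I would apply $T_p(-\times_R K^{ac})$ to the short exact sequence $0 \to \G^\circ \to \G \to \G^{et} \to 0$ to obtain a short exact sequence of $\O_L[\Gal(K^{ac}/K)]$-modules. Since $\G^\circ = T'' \otimes_{\O_L} \Sigma \times_{\O_L} R$, the basis $e_1'', \ldots, e_a''$ identifies $T_p(\G^\circ \times_R K^{ac})$ with $T'' \otimes_{\O_L} \O_L(1) \cong \O_L(1)^a$, on which Galois acts by $c_1 E_a$ by the very definition of $c_1$. The étale piece $\G^{et}$ is the unique (constant) lift to $R$ of the étale $\O_L$-Barsotti--Tate group over $k$ with Tate module $T'$, so the basis $e_1', \ldots, e_b'$ identifies $T_p(\G^{et} \times_R K^{ac})$ with $T' \cong \O_L^b$ endowed with the trivial Galois action, yielding the block $E_b$. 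This already forces the representation to take the block upper-triangular form claimed, and reduces the lemma to identifying the off-diagonal block $C$.

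Next, I would exploit the bifunctoriality of the push-out construction of Lemma \ref{coordinate}. The class of the extension $0 \to T_p(\G^\circ \times_R K^{ac}) \to T_p(\G \times_R K^{ac}) \to T_p(\G^{et} \times_R K^{ac}) \to 0$ lives in $\Ext^1_{\O_L[\Gal(K^{ac}/K)]}(T', T'' \otimes_{\O_L} \O_L(1))$, which in our chosen bases identifies naturally with $\Hom_{\O_L}(T', T'') \otimes_{\O_L} H^1_{cont}(\Gal(K^{ac}/K), \O_L(1))$, that is, with an $a \times b$ matrix of cohomology classes. Pulling back the push-out defining $\G$ along the inclusion $\O_L e_j' \hookrightarrow T'$ of the $j$-th basis vector and pushing it out along the projection $T'' \twoheadrightarrow \O_L e_i''$ onto the $i$-th one yields, by inspection of the push-out diagram of Lemma \ref{coordinate}, precisely the rank-one $\O_L$-Barsotti--Tate group $\G_{i,j}$ of \eqref{ijott}. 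Consequently, by additivity of $\Ext$, the $(i,j)$-entry of $C$ equals the class in $H^1_{cont}(\Gal(K^{ac}/K), \O_L(1))$ of the sequence $0 \to \O_L(1) \to T_p(\G_{i,j} \times_R K^{ac}) \to \O_L \to 0$.

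Finally, choosing any continuous set-theoretic lift of $1 \in \O_L$ to $T_p(\G_{i,j} \times_R K^{ac})$ and computing the associated $1$-cocycle $\tau \mapsto \tau(\text{lift}) - \text{lift}$ produces a representative $c_{i,j}$ of the sought extension class, which is by construction the cocycle described in the statement. Assembling these over all pairs $(i,j)$ yields the asserted matrix form. The main obstacle is the middle step: one has to verify cleanly that the equivalence of Lemma \ref{coordinate} is $\O_L$-bilinear in $(T', T'')$ and compatible with passage to Tate modules and with $\Ext$-groups, so that the matrix decomposition of $\phi$ in $\Hom_{\O_L}(T', T'') \otimes_{\O_L} \Sd(R)$ is matched by the matrix decomposition of the extension class in $\Hom_{\O_L}(T', T'') \otimes_{\O_L} H^1_{cont}(\Gal(K^{ac}/K), \O_L(1))$. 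Once this compatibility is in place, the identification of $c_{i,j}$ in the rank-one case is a direct computation involving the Kummer sequence for the Lubin--Tate group $\Sigma$.
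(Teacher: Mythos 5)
Your proposal is correct and takes essentially the same approach as the paper: the paper's proof reduces, ``by addition of extension classes,'' to the case where only one entry $\phi_{i_0,j_0}$ is non-zero, in which case $\G\cong\G_{i_0,j_0}\oplus(\Sigma\times_{\O_L}R)^{a-1}\oplus(L/\O_L)^{b-1}$ and the claim is immediate --- the same reduction you carry out by extracting the $(i,j)$-entry of the extension class via pull-back along $\O_Le_j'\hookrightarrow T'$ and push-out onto $\O_Le_i''$, followed by the same rank-one computation. The bilinearity/compatibility you flag as the remaining obstacle is precisely what the paper's one-line reduction silently uses, so your write-up is, if anything, slightly more explicit.
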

\begin{proof}
By addition of extension classes we may assume without 
loss of generality that at most one entry $\phi_{i_0,j_0}$ 
is non-zero. In this case $\G$ is isomorphic to 
$\G_{i_0,j_0}\oplus(\Sigma\times_{\O_L}R)^{a-1}\oplus(L/\O_L)^{b-1}$ 
and the assertion is obvious. 
\end{proof}

\subsection{Canonical Lifts}
\label{lift}
We turn to consequences for abelian schemes with $\O_L$-operation, where $L$
is a $CM$-field. In analogy to subsection ~\ref{caco} we fix a rational prime 
$p$ and assume that, every prime of $L^+$ over $p$ is split in $L$, hence 
there exists a set $\pi=\{\gq_1,\dots,\gq_m\}$ of primes of $L$ over $p$, 
such that $\{\gq_1,\dots,\gq_m,\gq_1^*,\dots,\gq_m^*\}$ exhaust all the 
primes of $L$ over $p$, consequently we have a decomposition:
\begin{equation}
\label{Kompost}
\O_L\otimes\z_p\cong\bigoplus_{i=1}^m\O_{L_{\gq_i}}
\oplus\bigoplus_{i=1}^m\O_{L_{\gq_i^*}}.
\end{equation}
We write $e_1,\dots,e_m,e_1^*,\dots,e_m^*\in\O_L\otimes\z_p$ for the 
corresponding idempotents. Moreover for $i\in\{1,\dots,r\}$, we want to fix 
embeddings $\sigma_i:L_{\gq_i}\rightarrow\q_p^{ac}$, where we fix an 
algebraic closure $\q_p^{ac}$ of $\q_p$. We put $\O_p^{ac}$ for the integers 
in $\q_p^{ac}$, and $\gP|p$ for the maximal ideal. Finally, $E$ is the field 
generated by the $\sigma_i(L_{\gq_i})$'s, with ring of integers $\O_E$, 
maximal ideal $\gp$, and residue field $\f_\gp$.\\
Let $S$ be a base scheme over $\O_E$ and let $A$ be an abelian scheme
over $S$, and let $\lambda$ be a $p$-principal quasipolarization on $A/S$. We 
will say that some operation $\iota:\O_L\rightarrow\End(A)$, makes 
$(A,\lambda)$ into a $\O_L$-abelian scheme if and only if for every 
$i\in\{1,\dots,r\}$ the induced operation on the projective $\O_S$-module 
$e_i\Lie A$ coincides with scalar multiplication by means of the map 
$\sigma_i:\O_{L_{\gq_i}}\rightarrow\O_E\rightarrow\Gamma(S,\O)$, and if 
$e_i\Lie A=0$ for the remaining $i\in\{r+1,\dots,m\}$. More specifically let 
$k$ be an algebraically closed field extension of $\f_\gp$. Then a 
$\O_L$-abelian scheme $(A,\lambda,\iota)$ over $k$ gives rise to 
$\O_{L_{\gq_i}}$-Barsotti-Tate groups $A[\gq_i^\infty]$. We will say that 
$(A,\lambda,\iota)$ is ordinary if this holds for all of the 
$A[\gq_i^\infty]$. In this case we can apply the Serre-Tate canonical 
coordinates to study deformations of the $\O_L$-abelian scheme 
$(A,\lambda,\iota)$ over any $\Spec k\hookrightarrow\Spec R$, such that $R$ 
is a noetherian local $\O_E$-algebra that is complete with respect to its 
maximal ideal. In particular let us look at the unique unramified extension 
$B/E$ of complete discretely valued fields that has $k/\f_\gp$ as residue 
field extension. The lift $(\tilde A,\lambda,\iota)$ to $\O_B$ with the 
canonical coordinates of all the $\tilde A[\gq_i^\infty]$ the trivial ones is 
called the canonical lift. We have the following important fact:

\begin{lem}
\label{rig}
Fix $\pi=\{\gq_1,\dots,\gq_m\}$ and $\sigma_i:L_{\gq_i}\rightarrow\q_p^{ac}$ 
for $i\in\{1,\dots,r\}$ as above. Let $(A,\lambda,\iota)$ be a $\O_L$-abelian 
variety over the algebraically closed field extension $k$ over $\f_\gp$. Let 
$(\tilde A,\lambda,\iota)$ be the canonical lift over $\O_B$. Then 
$\End_{\O_L}(\tilde A)=\End_{\O_L}(A)$, as $\z$-algebras with involution.
\end{lem}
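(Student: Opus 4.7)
The plan is to show that the natural reduction map $r:\End_{\O_L}(\tilde A)\to\End_{\O_L}(A)$ is a bijection of $\z$-algebras with involution. Injectivity is standard rigidity: morphisms between abelian schemes are determined by their reduction to the closed fibre, and this persists on the $\O_L$-linear subalgebras. Hence the hard direction is surjectivity.

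By Serre--Tate, lifting a given $f\in\End_{\O_L}(A)$ to $\tilde A$ is equivalent to lifting the induced $\O_L\otimes\z_p$-linear endomorphism $f[p^\infty]$ of $A[p^\infty]$. The idempotents of \eqref{Kompost} split $A[p^\infty]=\bigoplus_i(A[\gq_i^\infty]\oplus A[(\gq_i^*)^\infty])$, and $f[p^\infty]$ respects this decomposition. It therefore suffices to lift, for each $i\in\{1,\dots,m\}$, the endomorphism induced on the ordinary $\O_{L_{\gq_i}}$-Barsotti--Tate group $A[\gq_i^\infty]$ to its canonical lift $\tilde A[\gq_i^\infty]$; the $\gq_i^*$-part is handled by passing to the dual via $\lambda$. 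By Lemma~\ref{coordinate} the canonical lift corresponds to the trivial coordinate $\phi=0\in\Hom_{\O_{L_{\gq_i}}}(T',T'')\otimes\Sd(\O_B)$, and since the connected--\'etale sequence over the algebraically closed $k$ splits, an endomorphism of $A[\gq_i^\infty]$ is just a pair $(\alpha',\alpha'')\in\End_{\O_{L_{\gq_i}}}(T')\times\End_{\O_{L_{\gq_i}}}(T'')$. The functoriality of the push-out in Lemma~\ref{coordinate} shows that such a pair extends to $\tilde A[\gq_i^\infty]$ precisely when $\alpha''\circ\phi=\phi\circ\alpha'$; with $\phi=0$ this is automatic, so the lift exists and is unique, and is automatically $\O_{L_{\gq_i}}$-linear.

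Assembling these lifts across all $\gq_i,\gq_i^*$ yields an $\O_L$-linear endomorphism of $\tilde A[p^\infty]$, which by Serre--Tate descends to the desired $\tilde f\in\End_{\O_L}(\tilde A)$. The involution compatibility is then formal: by the very same functoriality argument applied to the dual $\O_L$-BT group and the polarization, $\lambda$ itself has a canonical lift to $\tilde A$, and the Rosati involutions on both sides are computed by $f\mapsto\lambda^{-1}\circ f^\vee\circ\lambda$; hence $r$ intertwines them. The main obstacle is to verify cleanly the functoriality claim in Lemma~\ref{coordinate} --- that the equivalence of categories there sends a morphism of $\O_L$-BT groups to the matrix identity $\alpha''\circ\phi=\phi\circ\alpha'$ in the extension group. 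Once this is granted, everything else is a diagram chase combined with the standard vanishing $\Hom_{\O_L}(T'\otimes L/\O_L,T''\otimes\Sigma\times_{\O_L}\O_B)=0$ that was already used in the proof of that lemma.
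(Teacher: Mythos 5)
Your proposal is correct and takes essentially the same route as the paper: the paper's proof is just a citation of Messing, Chapter V, Theorem (3.3), whose argument (Serre--Tate reduction to $p$-divisible groups, splitting of the canonical lift, rigidity of the connected and \'etale pieces, duality for the $\gq_i^*$-parts) is exactly what you spell out in the coordinate language of Lemma~\ref{coordinate}. The functoriality you flag is the standard compatibility criterion from Katz's Serre--Tate local moduli (already the source for Lemma~\ref{coordinate}) and is automatic for $\phi=0$, so there is no real gap.
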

\begin{proof}
The proof in ~\cite[Chapter V,Theorem(3.3)]{messing} translates word 
for word to our situation.
\end{proof}

\section{The integral model $\M^{(0\times1)}$}
\label{good}
\subsection{The moduli problem}
\label{moduli}
Recall the two $CM$ traces $\Phi^{(0)}$ and $\Phi^{(n)}$ for $L$, as in 
section ~\ref{Hodge}, and recall also our choice of $(V^{(1)}$, $V^{(0)}$, 
$\psi^{(1)}$, $\psi^{(0)}$, $h^{(1)}$, $h^{(0)})$ giving rise to Shimura data 
$(G^{(0\times1)},X^{(0\times1)})$ and of the level $l\geq3$. We fix a prime 
$\gP|p$ in $\q^{ac}$, the algebraic closure of $\q$ in $\c$, and 
assume ($\spadesuit$) the following:
\begin{itemize}
\item
the pairings $\psi^{(1)}$ and $\psi^{(0)}$ induce $\z_{(p)}$-valued 
perfect pairings on $V_{\z_{(p)}}^{(1)}$ and $V_{\z_{(p)}}^{(0)}$,
\item
$p$ is coprime to $l$,
\item
there exists a set $\pi$ of primes of $L$ over $p$ such that 
$$|\Phi^{(0)}|=\{\sigma:L\rightarrow\q^{ac}|\sigma^{-1}(\gP)\in\pi^*\},$$
\item
if $|\Phi^{(n)}|-|\Phi^{(0)}|=\{\sigma_1,\dots,\sigma_r\}$ then 
$\gq_1=\sigma_1^{-1}(\gP),\dots,\gq_r=\sigma_r^{-1}(\gP)$ are pairwise 
distinct prime ideals of $L$.
\end{itemize}
We write $\{\gq_i|i=r+1,\dots,m\}$ for the remaining primes in $\pi$. The 
primes in $\pi^*$ are nothing else then $\{\gq_1^*,\dots,\gq_m^*\}$, 
consequently we have a decomposition as in ~\eqref{Kompost} with corresponding
idempotents $e_1$, $\dots$, $e_m$, $e_1^*$, $\dots$, 
$e_m^*\in\O_L\otimes\z_p$. We write $E$ for the field generated by the 
$\sigma_i(L)$'s, we write $\gp$ for the prime ideal induced in $E$ by $\gP$, 
and denote as usual by $\O_{E_\gp}$ the integers in the completion of $E$ at 
$\gp$, notice that $E$ contains the reflex field $E^{(0\times1)}$ and that 
$E_\gp$ may well be ramified over $E_\gp^{(0\times1)}$. The moduli 
interpretation for $M^{(0\times1)}$ that we will give is only defined over 
the extension $\O_{E_\gp}$.\\
The first two $\spadesuit$-conditions imply that the group $K^{(0\times1)}$, 
as introduced in ~\eqref{co}, allows a factorization 
${K^{(0\times1)}}^pK_p^{(0\times1)}$, where 
${K^{(0\times1)}}^p\subset G^{(0\times1)}(\ap)$ is compact open, and
$K_p^{(0\times1)}=G_\z^{(0\times1)}(\z_p)$ looks like $\z_p^\times\times
\prod_{i=1}^m\O_{L_{\gq_i}}^\times\times\prod_{i=1}^m\GL(n,\O_{L_{\gq_i}})$. 
We begin by introducing a moduli space of abelian varieties. We need the 
following set valued functor, over a $\O_{E_\gp}$-scheme $S$ its points 
consist of:

\begin{itemize}
\item[(a')]
$Y^{(1)}$ and $Y^{(0)}$, abelian schemes over $S$, up to $\z_{(p)}$-isogeny,
\item[(b')]
operations $\iota^{(1)}:\O_L\rightarrow\End(Y^{(1)})\otimes\z_{(p)}$
and $\iota^{(0)}:\O_L\rightarrow\End(Y^{(0)})\otimes\z_{(p)}$, such that
for $k\in\{0,1\}$ the $\O_S$-modules $e_i\Lie Y^{(k)}$ are projective of 
rank $1$ if $k=1$ and $i\in\{1,\dots,r\}$ and of rank $0$ otherwise, 
moreover in the former case the $\O_L$-operation on $e_i\Lie Y^{(1)}$ is 
given by $\sigma_i$.
\item[(c')]
$\lambda^{(0\times1)}$, a homogeneous class of polarizations on 
$Y^{(0)}\times Y^{(1)}$ containing a representative of degree in 
$\z_{(p)}^\times$,
\item[(d')]
level-${K^{(0\times1)}}^p$-structure $\ebar^{(0\times1)}$, i.e. for some 
choice of geometric point $\xi$ of $S$ one has a $\pi_1(S,\xi)$-invariant  
${K^{(1)}}^p$- (resp. ${K^{(0)}}^p$-) class of $\O_L\otimes\ap$-linear 
symplectic similitudes:
$$
\eta^{(1)}:V^{(1)}\otimes\ap\rightarrow H_1(Y_\xi^{(1)},\ap)
$$
(resp.
$$
\eta^{(0)}:V^{(0)}\otimes\ap\rightarrow H_1(Y_\xi^{(0)},\ap)
$$
), of which the multipliers agree. 
\end{itemize}
The above functor is representable by a $\O_{E_\gp}$-scheme $\M^{(0\times1)}$.
The general fibre of it is canonically isomorphic to
\begin{equation}
\label{haesslich}
M^{(0\times1)}=\coprod_i{_{K^{(0\times1)}}M}
(G_i^{(0\times1)},X^{(0\times1)})\times_{E^{(0\times1)}}E_\gp
\end{equation}
where $i$ indexes all the locally trivial $G^{(0\times1)}$-torsors, and where
$G_i^{(0\times1)}$ is the automorphism group of the $i$th 
$G^{(0\times1)}$-torsor. It is clear that the constructions in section 
~\ref{Hodge}, can be applied to each of the Shimura varieties in
~\eqref{haesslich} at a time, i.e. there are Shimura data 
$(G_i^{(k)},X^{(k)})$ corresponding to each of the locally trivial 
$G^{(0\times1)}$-torsors and there are maps
\begin{equation}
\label{velfi}
g^{(k)}:{_{K^{(0\times1)}}M}(G_i^{(0\times1)},X^{(0\times1)}) 
\rightarrow{_{K^{(k)}}M}(G_i^{(k)},X^{(k)})
\end{equation}
generalizing the morphism ~\eqref{velf}. In particular, by our conventions on 
the compact open subgroups $K^{(0\times1)}$ and $K^{(k)}$, we obtain 
homogeneously polarized abelian schemes with properties as expressed in 
lemma ~\ref{vvier} over each of the Shimura varieties 
${_{K^{(0\times1)}}M}(G_i^{(0\times1)},X^{(0\times1)})$. We will continue to 
denote these by $Y^{(k)}$. The same remark applies to the induced classifying 
maps ~\eqref{fuenf} which generalize to give maps:
\begin{equation}
\label{fuenfi}
g^{(k)}:M^{(0\times1)}\rightarrow\A_{g_k,d_k,l}.
\end{equation}
Here is $g_k=[L^+:\q]\binom{n}{k}$, and $d_k$ is the degree of some choice
of an effective polarization within the homogeneous class $\lambda^{(k)}$.

\begin{lem}
If $\spadesuit$ holds, then $\M^{(0\times1)}$ is smooth over $\O_{E_\gp}$.
\end{lem}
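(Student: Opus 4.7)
The plan is to prove formal smoothness via Grothendieck-Messing deformation theory, combined with standard representability arguments that already guarantee $\M^{(0\times1)}$ is of finite type over $\O_{E_\gp}$. Since smoothness at the generic fibre follows from Deligne's construction of the canonical models (they are smooth over $E^{(0\times 1)}$, hence over $E_\gp$), the content lies in smoothness at the points of the special fibre.

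Concretely, I would take an affine $\O_{E_\gp}$-scheme $\Spec R$ and a square-zero ideal $I \subset R$ equipped with its canonical (trivial) divided power structure, write $R_0 = R/I$, and show that every $R_0$-point of $\M^{(0\times 1)}$ lifts to an $R$-point. Let $(Y^{(k)}_0, \iota^{(k)}_0, \lambda^{(0\times 1)}_0, \ebar^{(0\times 1)}_0)$ represent such a point. By Grothendieck-Messing (see ~\cite{messing}), lifting the pair $(Y^{(0)}_0 \times Y^{(1)}_0, \lambda^{(0\times 1)}_0)$ to an abelian scheme with polarization over $R$ is equivalent to lifting the Hodge filtration
\begin{equation*}
0 \to \mathrm{Fil}^1_0 \to D(Y^{(0)}_0[p^\infty]\times Y^{(1)}_0[p^\infty])_{R_0} \to \Lie(Y^{(0)}_0 \times Y^{(1)}_0) \to 0
\end{equation*}
to a filtration $\mathrm{Fil}^1 \subset D(\cdot)_R$ by an $R$-submodule which is a direct summand and isotropic with respect to the perfect pairing induced by $\lambda^{(0\times 1)}_0$ (which is $p$-principal by the first bullet of $\spadesuit$). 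Lifts of $\iota^{(k)}_0$ correspond to such lifts that are additionally stable under the $\O_L \otimes R$-action on the crystal.

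The decomposition ~\eqref{Kompost} breaks $D(\cdot)_R$ into pieces indexed by $\{e_i, e_i^*\}_{i=1}^m$; the polarization pairing identifies the $e_i$-summand with the $\O_R$-dual of the $e_i^*$-summand, so it suffices to lift $\mathrm{Fil}^1$ in each $e_i$-summand independently, and each such lift determines the corresponding $e_i^*$-summand as the annihilator. For each $i$, the datum (b') requires $e_i \Lie Y^{(k)}$ to be projective of a specified rank with $\O_{L_{\gq_i}}$-action given by $\sigma_i: \O_{L_{\gq_i}} \to \O_{E_\gp} \to R$; since this action factors through $R$, the condition becomes a pure rank condition on the quotient $e_i D(\cdot)_R / e_i \mathrm{Fil}^1$, viewed as an $R$-module. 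Lifts of a direct-summand filtration of given rank in a free module always exist and form a torsor under $\Hom_R(\mathrm{Fil}^1_0, \Lie Y^{(k)}_0) \otimes_{R_0} I$, so the lift exists (and $\M^{(0\times 1)}$ is actually formally smooth of the correct relative dimension).

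It remains to lift the level structure $\ebar^{(0\times 1)}_0$. Since $p \nmid l$ and ${K^{(0\times 1)}}^p \subset G^{(0\times 1)}(\ap)$ is prime-to-$p$, the $\O_L \otimes \ap$-module $H_1(Y_\xi^{(k)}, \ap) = T^p(Y^{(k)}) \otimes \q$ is insensitive to nilpotent thickenings by the rigidity of étale covers; hence $\ebar^{(0\times 1)}_0$ extends uniquely. The main subtlety to watch is the compatibility of the rank condition in (b') with the polarization constraint, but this is exactly where the splitting hypothesis in $\spadesuit$ (all primes of $L^+$ above $p$ split in $L$) enters: it decouples the $e_i$ and $e_i^*$ components so that the rank conditions can be imposed freely on one side and read off on the other, avoiding the ramification obstructions that would appear in the non-split case.
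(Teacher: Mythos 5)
Your route---Grothendieck--Messing lifting of the Hodge filtration, in the style of Kottwitz---is a genuinely different one from the paper's, which instead runs the local model method of Rapoport--Zink: the paper identifies the relevant linear-algebra moduli problem with $(\p^{n-1})^{\times r}$ over $\O_{E_\gp}$, which is visibly smooth, and smoothness of $\M^{(0\times1)}$ follows from the local model diagram. Your skeleton is sound in several places: the reduction to lifting an isotropic, $\O_L\otimes R$-stable direct summand of the crystal, the use of the split hypothesis in $\spadesuit$ to decouple the $e_i$- and $e_i^*$-components (lift on one side, take annihilators on the other), and the rigidity of the prime-to-$p$ level structure.

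The gap is at the decisive step, where you assert that since the $\O_{L_{\gq_i}}$-action on $e_i\Lie Y^{(1)}$ is through $\sigma_i$, condition (b') ``becomes a pure rank condition on the quotient viewed as an $R$-module'', and then conclude because lifts of a direct-summand filtration of given corank in a free module always exist. Condition (b') prescribes not only the rank but also that the rank-one quotient lies in the $\sigma_i$-eigenspace, and the filtration must be lifted $\O_{L_{\gq_i}}\otimes_{\z_p}R$-stably; since $\spadesuit$ only requires the primes of $L^+$ over $p$ to split in $L$, one may have $[L_{\gq_i}:\q_p]>1$ and even ramification, so this is strictly more than a rank condition. Concretely, one has to lift a surjection from $e_iD_R\otimes_{\O_{L_{\gq_i}}\otimes R,\,\sigma_i}R$ onto a line bundle, and the natural way to see this is unobstructed is to know that $e_iD_R$ is projective over $\O_{L_{\gq_i}}\otimes_{\z_p}R$ of the expected rank (equivalently, that its $\sigma_i$-fibre is $R$-projective of rank $n$). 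That is automatic when $\gq_i$ is unramified over $p$ (separability idempotent), and in that case your argument is essentially complete, but in the ramified case it is a real assertion about the crystal: for a general $\O_{L_{\gq_i}}\otimes R$-module that is merely $R$-free, $\O_L$-stable lifts with prescribed eigen-quotient can be obstructed. Supplying this (say via the corank-one, Drinfeld-type structure of $Y^{(1)}[\gq_i^\infty]$) is exactly the content that the smoothness of the paper's local model $(\p^{n-1})^{\times r}$ encodes, and it is missing from your write-up. A minor further correction: the lifts form a torsor under the $\O_L\otimes R_0$-linear homomorphisms $\Hom_{\O_L\otimes R_0}(\mathrm{Fil}^1_0,\Lie)\otimes_{R_0}I$ compatible with the constraints, not under all $R_0$-linear ones.
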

\begin{proof}
We use the concept of a local model $M^{loc}$ over $\O_{E_\gp}$, 
following the method of ~\cite{rapoport}. In the case at hand, the functor 
which $M^{loc}$ represents, can be described as follows, over 
$S/\Spec\O_{E_\gp}$ the points consist of pairs $(t,\phi)$ where $t$ is a 
$(\bigoplus_{i=1}^m\O_{L_{\gq_i}})\otimes_{\z_p}\O_S$ module, and $\phi:
 (\bigoplus_{i=1}^m\O_{L_{\gq_i}})^n\otimes_{\z_p}\O_S\rightarrow t$ is a 
$(\bigoplus_{i=1}^m\O_{L_{\gq_i}})\otimes_{\z_p}\O_S$-linear surjective map 
such that:
\begin{itemize}
\item
$e_it$ is a line bundle on $S$, for all $i\in\{1,\dots,r\}$, moreover 
$\O_{L_{\gq_i}}$ acts on it by $\sigma_i$.
\item
$e_it=0$  for all $i\in\{r+1,\dots,m\}$
\end{itemize}
One sees that $M^{loc}$ is smooth, as the fibres are isomorphic to
$(\p^{n-1})^{\times r}$.
\end{proof}

\subsection{Stratification}
Let $k$ be an algebraically closed field over $\f_\gp$ and let 
$\xi:\Spec k\rightarrow\M^{(0\times1)}$ be a point corresponding to data 
$(Y_\xi^{(1)},Y_\xi^{(0)},\dots)$. If $i\in\{r+1,\dots,m\}$, then the 
$\O_{L_{\gq_i}}$-Barsotti-Tate group $Y_\xi^{(1)}[\gq_i^\infty]$ is \'etale, 
if $i\in\{1,\dots,r\}$ it is one-dimensional, hence isomorphic to 
$$
G^{\O_{L_{\gq_i}}}_{1,n-f_i-1}\oplus(L_{\gq_i}/\O_{L_{\gq_i}})^{f_i}
$$
for some $f_i\in\{0,\dots,n-1\}$, here the notation is from 
~\cite[(29.8)]{hazewinkel}. The formal $\O_{L_{\gq_i}}$-module is in this 
case isomorphic to
$$
G^{\O_{L_{\gq_i}}}_{1,n-f_i-1}.
$$
We call $\xi$ ordinary if $f_1=\dots=f_r=n-1$, i.e. if and only if all the 
$\O_{L_{\gq_i}}$-Barsotti-Tate groups $Y_\xi^{(1)}[\gq_i^\infty]$ are 
ordinary in the sense of section ~\ref{deform}. The ordinary locus is
Zariski open by ~\cite{richartz}. We let $\M_{ord}^{(0\times1)}$ be the open 
subscheme of $\M^{(0\times1)}$ obtained by removing the non-ordinary locus in 
the special fibre.

\section{An extension theorem}
\label{lego}
We continue the study of the abelian schemes $Y^{(k)}$, we begin with a lemma 
on the degree of $Y^{(k)}$:

\begin{lem}
\label{prinz}
Let $(V^{(1)},V^{(0)},\psi^{(1)},\psi^{(0)})$ and corresponding 
$(V^{(k)},\psi^{(k)})$, be as in section ~\ref{Hodge}.
If $\psi^{(1)}$ and $\psi^{(0)}$ induce $\z_{(p)}$-valued 
perfect pairings on $V_{\z_{(p)}}^{(1)}$ and $V_{\z_{(p)}}^{(0)}$, then
$\psi^{(k)}$ induces a $\z_{(p)}$-valued perfect pairing on 
$V_{\z_{(p)}}^{(k)}$.
\end{lem}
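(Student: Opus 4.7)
The plan is to pass to the local picture at each prime of $L$ above $p$, where the claim reduces to an elementary Gram determinant identity. Perfectness of $\psi^{(k)}$ over $\z_{(p)}$ is equivalent to perfectness after tensoring with $\z_p$. Under the isomorphism $\O_L\otimes\z_p\cong\prod_{v\mid p}\O_{L_v}$ the pairing $\psi^{(k)}$ splits according to the $*$-orbits of places $v$ above $p$: $\tr_{L_v/\q_p}\Psi^{(k)}$ is a self-pairing on $V^{(k)}_v$ when $v=v^*$, and otherwise couples $V^{(k)}_v$ with $V^{(k)}_{v^*}$. I will treat the case $v=v^*$; the split case is analogous.

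Since $\O_L\otimes\z_p$ is a product of DVRs, I fix $\O_{L_v}$-bases $e$ of $V^{(0)}_v$ and $f_1,\dots,f_n$ of $V^{(1)}_v$, set $\alpha=\Psi^{(0)}(e,e)$ and $A=(\Psi^{(1)}(f_i,f_j))_{i,j}\in\Mat_n(L_v)$, and let $\delta$ generate the local different $\mathfrak{d}_{L_v/\q_p}$. Using that the trace form on $\O_{L_v}$ has discriminant $N_{L_v/\q_p}(\delta)$ up to a $\z_p$-unit, one checks that a sesquilinear pairing on a rank-$r$ $\O_{L_v}$-module with Gram matrix $M\in\Mat_r(L_v)$ yields a $\z_p$-perfect trace pairing iff $(\det M)\delta^r\in\O_{L_v}^\times$. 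The hypotheses on $\psi^{(0)}$ and $\psi^{(1)}$ therefore translate into $\alpha\delta\in\O_{L_v}^\times$ and $(\det A)\delta^n\in\O_{L_v}^\times$.

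From the defining formula for $\Psi^{(k)}$, its Gram matrix in the basis $\{e^{1-k}\otimes f_I : |I|=k\}$ of $V^{(k)}_v$ is $\alpha^{1-k}\bigwedge^k A$, whose determinant equals $\alpha^{(1-k)\binom{n}{k}}(\det A)^{\binom{n-1}{k-1}}$ by the classical exterior-power identity $\det(\bigwedge^k A)=(\det A)^{\binom{n-1}{k-1}}$. So perfectness of $\psi^{(k)}$ at $v$ amounts to saying this determinant times $\delta^{\binom{n}{k}}$ is a unit, which I rearrange as $(\alpha\delta)^{(1-k)\binom{n}{k}}\cdot((\det A)\delta^n)^{\binom{n-1}{k-1}}\cdot\delta^{e}$ with $e=\binom{n}{k}-(1-k)\binom{n}{k}-n\binom{n-1}{k-1}=k\binom{n}{k}-n\binom{n-1}{k-1}$. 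The combinatorial identity $k\binom{n}{k}=n\binom{n-1}{k-1}$ forces $e=0$, and the remaining two factors are units by hypothesis, finishing the proof. The main obstacle is the bookkeeping with the local different: once one correctly translates $\z_p$-perfectness of the trace pairing into the valuation condition $(\det M)\delta^r\in\O_{L_v}^\times$, the whole argument collapses to the exterior-power determinant identity together with $k\binom{n}{k}=n\binom{n-1}{k-1}$.
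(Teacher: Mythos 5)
Your proof is correct in substance and, at bottom, rests on the same reduction as the paper's: replace the $\q$-valued trace form $\psi^{(k)}=\tr_{L/\q}\Psi^{(k)}$ by the $L$-valued sesquilinear form $\Psi^{(k)}$ twisted by the inverse different, and then exploit that $\Psi^{(k)}$ is by construction the $(1-k)$-fold tensor power of $\Psi^{(0)}$ times the $k$-th exterior power of $\Psi^{(1)}$. The paper executes this without coordinates: perfectness of $\Psi^{(0)}$ and $\Psi^{(1)}$ as pairings into $\D_L^{-1}\otimes\z_{(p)}$ is inherited by $\bigwedge^k\Psi^{(1)}$ (valued in $\D_L^{-k}$) and by $(\Psi^{(0)})^{\otimes 1-k}$ (valued in $\D_L^{k-1}$), and the product is then perfect into $\D_L^{-1}$. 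Your version localizes at each place $v\mid p$, picks bases, and verifies the same fact through the Sylvester--Franke identity $\det(\bigwedge^kA)=(\det A)^{\binom{n-1}{k-1}}$ together with $k\binom{n}{k}=n\binom{n-1}{k-1}$; the different-exponent bookkeeping you carry out is exactly the identity $\D_L^{k-1}\otimes\D_L^{-k}=\D_L^{-1}$ in disguise. The basis-free route is shorter and needs no determinant identities, but your computation is a legitimate and more explicit substitute.

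One step is stated too strongly: the criterion ``the trace pairing with Gram matrix $M$ is $\z_p$-perfect iff $(\det M)\delta^r\in\O_{L_v}^\times$'' is false without the additional requirement that $\delta M$ have entries in $\O_{L_v}$ (take $M=\delta^{-1}\operatorname{diag}(\pi^{-1},\pi)$: the determinant condition holds but the lattice is not self-dual, and the pairing is not even $\z_p$-valued). This matters here because $\z_{(p)}$-valuedness of $\psi^{(k)}$ on $V^{(k)}_{\z_{(p)}}$ is part of what the lemma asserts, and your determinant count alone does not deliver it. The repair is immediate: from the hypotheses, $\alpha\delta\in\O_{L_v}^\times$ and $\delta A\in\GL_n(\O_{L_v})$, so the Gram matrix of $\delta\Psi^{(k)}$ is
$$
\delta\,\alpha^{1-k}\textstyle\bigwedge^kA=(\alpha\delta)^{1-k}\bigwedge^k(\delta A)\in\GL_{\binom{n}{k}}(\O_{L_v}),
$$
which gives integrality and unit determinant in one stroke (and in fact makes the explicit determinant computation unnecessary). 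With that clause added, your argument is complete.
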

\begin{proof}
The data $\psi^{(k)}$ gives rise to a $\z_{(p)}$-valued perfect pairing on 
the $\O_L\otimes\z_{(p)}$-module $V_{\z_{(p)}}^{(k)}$ if and only if
$$
\Psi^{(k)}:V_{\z_{(p)}}^{(k)}\times V_{\z_{(p)}}^{(k)}
\rightarrow\D_L^{-1}\otimes\z_{(p)}
$$
has this property, where $\D_L$ is the different of $\O_L$. If one has this
for $k\in\{0,1\}$, then the same follows for the exterior pairing 
$$
\bigwedge_{\O_L}^kV_{\z_{(p)}}^{(1)}\times
\bigwedge_{\O_L}^kV_{\z_{(p)}}^{(1)}\rightarrow
\D_L^{-k}\otimes\z_{(p)},
$$
induced by $\Psi^{(1)}$, and for the $1-k$-fold self-product of $\Psi^{(0)}$:
$$
{V_{\z_{(p)}}^{(0)}}^{\otimes_{\O_L}1-k}
\times{V_{\z_{(p)}}^{(0)}}^{\otimes_{\O_L}1-k}
\rightarrow\D_L^{k-1}\otimes\z_{(p)}.
$$
By taking the product once more we obtain that 
$$
\Psi^{(k)}:V_{\z_{(p)}}^{(k)}\times V_{\z_{(p)}}^{(k)}
\rightarrow\D_L^{-1}\otimes\z_{(p)}
$$
is perfect, which is what we wanted.
\end{proof}

\subsection{Extension of $Y^{(k)}[\gq_i^\infty]$}
\label{formal}
With these gadgets we are now in a position to give a Serre-Tate analog 
of the tensor constructions of section ~\ref{Hodge}.
Let us start with an algebraically closed extension $k/\f_\gp$ and an ordinary
$k$-valued point $\xi:\Spec k\rightarrow\M_{ord}^{(0\times1)}$, that is 
represented by the tuple $(Y_\xi^{(1)}$, $Y_\xi^{(0)}$, $\iota^{(1)}$, 
$\iota^{(0)}$, $\lambda^{(0\times1)}$, $\dots)$. Our aim in this subsection is 
to define a certain Barsotti-Tate group: 
$$
\G^{(k)}=\bigoplus_{i=1}^m\G^{(k)}[\gq_i^\infty]
\oplus\bigoplus_{i=1}^m\G^{(k)}[\gq_i^{*\infty}]
$$
on the universal deformation space $\Df_\xi=\Spec R$ of $\xi$, in such a 
manner that the generic fibre of it matches our $Y^{(k)}[\gq_i^\infty]$. 
Here we use the usual algebraization results of polarized formal abelian 
schemes to freely switch between $\Spec R$ and $\Spf R$. Note that according 
to lemma ~\ref{coordinate} we have a canonical isomorphism 
$$
\Spf R\cong\prod_{i=1}^r\Hom_{\O_{L_{\gq_i}}}({T_i^{(1)}}',{T_i^{(1)}}'')
\otimes_{\O_{L_{\gq_i}}}\Sd_i\times_{\O_{L_{\gq_i}},\sigma_i}\O_B,
$$
where $\O_B\subset B$ is again the ring of integers in the unique complete
unramified extension $B$ of $E_\gp$ inducing the residue extension 
$k/\f_{\gp}$. It goes without saying that $\G^{(k)}[\gq_i^{*\infty}]$ will 
be the Serre dual of $\G^{(k)}[\gq_i^\infty]$. It is then meaningful to put:
$$
{T_i^{(k)}}'=\Hom_{\O_{L_{\gq_i}}}
(L_{\gq_i}/\O_{L_{\gq_i}},Y^{(k)}[\gq_i^\infty])
$$
and
$$
{T_i^{(1)}}''=\Hom_{\O_{L_{\gq_i}}}(\Sigma_i
\times_{\O_{L_{\gq_i}},\sigma_i}k,Y^{(1)}[\gq_i^\infty]),
$$
for $i\in\{1,\dots,m\}$, and $k\in\{0,1\}$, and further
$$
{T_i^{(k)}}'={{T_i^{(0)}}'}^{\otimes_{\O_{L_{\gq_i}}}1-k}
\otimes_{\O_{L_{\gq_i}}}\bigwedge_{\O_{L_{\gq_i}}}^k{T_i^{(1)}}'
$$
and
$$
{T_i^{(k)}}''={{T_i^{(0)}}'}^{\otimes_{\O_{L_{\gq_i}}}1-k}
\otimes_{\O_{L_{\gq_i}}}{T_i^{(1)}}''\otimes_{\O_{L_{\gq_i}}}
\bigwedge_{\O_{L_{\gq_i}}}^{k-1}{T_i^{(1)}}'
$$
Let us also introduce a map:
$$
g_{can,i}^{(k)}:\Hom_{\O_{L_{\gq_i}}}({T_i^{(1)}}',{T_i^{(1)}}'')
\rightarrow
\Hom_{\O_{L_{\gq_i}}}({T_i^{(k)}}',{T_i^{(k)}}'')
$$
which is defined by contraction of elements, sending 
$\phi^{(1)}:{T_i^{(1)}}'\rightarrow{T_i^{(1)}}''$ to the map 
$\phi^{(k)}:{T_i^{(k)}}'\rightarrow{T_i^{(k)}}''$ defined by
$$
x_0^{1-k}x_1\wedge\dots\wedge x_k\mapsto
\sum_{\nu=1}^k(-1)^{\nu-1}x_0^{1-k}\phi^{(1)}(x_\nu)
x_1\wedge\dots\wedge x_{\nu-1}\wedge x_{\nu+1}\wedge\dots\wedge x_k,
$$
where $x_0\in{T_i^{(0)}}'-\{0\}$ and $x_1,\dots,x_k\in{T_i^{(1)}}'$. Notice 
that ${T_i^{(k)}}''$ and $g_{can,i}^{(k)}$ are only non-zero if 
${T_i^{(1)}}''$ is non-zero, which happens for all $i$ in $\{1,\dots,r\}$. To 
define a $\O_{L_{\gq_i}}$-Barsotti-Tate group $\G^{(k)}[\gq_i^\infty]$ of 
$\O_{L_{\gq_i}}$-height $\binom{n}{k}$ over $\Df_\xi$ we consider
$$
{T_i^{(k)}}''\otimes_{\O_{L_{\gq_i}}}
\Sigma_i\times_{\O_{L_{\gq_i}},\sigma_i}k\oplus
{T_i^{(k)}}'\otimes_{\O_{L_{\gq_i}}}L_{\gq_i}/\O_{L_{\gq_i}}
$$
and deform it by $g_{can,i}^{(k)}$. We can finish this subsection with the
following two auxiliary lemmas:

\begin{lem}
\label{vdrei}
Let $\pi_1$ be a profinite group. Let $\rho$ be a continuous representation 
thereof into the group $\GL(n,A)$, where $A$ is a ring, separated and complete
with respect to the $M$-adic topology where $M$ is an ideal. Assume that 
$\rho$ has the form
$$
\left(\begin{matrix}
c_1&c_2&\dots&c_n\\
0&1&\dots&0\\
\vdots&\vdots&\ddots&\vdots\\
0&0&\dots&1
\end{matrix}\right)
$$
where $c_1:\pi_1\rightarrow A^\times$ is a continous character and where
the maps $c_2$, $\dots$, $c_n:\pi_1\rightarrow A$  are continous $1$-cocycles 
of $\pi_1$ with coefficients in $c_1$ (i.e. representatives of elements in 
$H_{cont}^1(\pi_1,c_1)$). Then for every $k\in\{0,\dots,n\}$ the 
representation $\bigwedge^k\rho:\pi_1\rightarrow\GL({\binom{n}{k}},A)$ has 
the form
$$
\left(\begin{matrix}
c_1E_{\binom{n-1}{k-1}}&C\\
0&E_{\binom{n-1}{k}}
\end{matrix}\right)
$$
where the $\binom{n-1}{k-1}\times\binom{n-1}{k}$-matrix $C$ can be 
described as follows: the rows are indexed by the set of $k-1$-element 
subsets $I=\{i_2,\dots,i_k\}\subset\{2,\dots,n\}$, the columns are 
indexed by the set of $k$-element subsets 
$J=\{i_1,\dots,i_k\}\subset\{2,\dots,n\}$, and the entry in the 
$I$'th row and $J$'th column is equal to
$$
\begin{cases}
(-1)^{\nu-1}c_{i_\nu}&I=J-\{i_\nu\}\\
0&\text{otherwise}
\end{cases}
$$
\end{lem}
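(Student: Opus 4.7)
The plan is to compute $\bigwedge^k\rho(g)$ on an explicit basis of $A^{\binom{n}{k}}$, ordered so as to exhibit the block structure directly.

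Let $e_1,\dots,e_n$ denote the standard basis of $A^n$, on which $\rho(g)$ acts by $\rho(g)e_1=c_1(g)e_1$ and $\rho(g)e_j=c_j(g)e_1+e_j$ for $j\geq 2$. As a basis for $\bigwedge^kA^n$ I would take the wedges $e_S=e_{s_1}\wedge\dots\wedge e_{s_k}$ indexed by $k$-subsets $S=\{s_1<\dots<s_k\}\subset\{1,\dots,n\}$, and split the indexing set into two groups: those $S$ containing $1$, which I write as $S=\{1\}\cup I$ with $I\subset\{2,\dots,n\}$ of size $k-1$ (there are $\binom{n-1}{k-1}$ of these), and those $S=J\subset\{2,\dots,n\}$ of size $k$ (there are $\binom{n-1}{k}$ of these). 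Listing the basis in this order immediately produces the desired $2\times 2$ block form, and it only remains to identify the blocks.

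For a basis vector $e_1\wedge e_I$ of the first type, I would expand
$$
\bigwedge^k\rho(g)(e_1\wedge e_I)=(c_1(g)e_1)\wedge\bigwedge_{i\in I}(c_i(g)e_1+e_i).
$$
Every term in the expansion that substitutes $c_i(g)e_1$ for some $e_i$ contains $e_1$ twice and therefore vanishes, so the sum collapses to $c_1(g)\,e_1\wedge e_I$. This shows the top-left block is exactly $c_1\,E_{\binom{n-1}{k-1}}$, and also that nothing of type $e_J$ with $J\not\ni 1$ appears in the image, so the lower-left block is zero.

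For a basis vector $e_J$ of the second type, with $J=\{i_1<\dots<i_k\}\subset\{2,\dots,n\}$, I would expand
$$
\bigwedge^k\rho(g)(e_J)=\bigwedge_{\nu=1}^{k}(c_{i_\nu}(g)e_1+e_{i_\nu}).
$$
Any term substituting $c_{i_\nu}(g)e_1$ in two or more slots vanishes, so only the "no substitution" term $e_J$ (contributing to the bottom-right block, giving $E_{\binom{n-1}{k}}$) and the $k$ single-substitution terms survive. Replacing $e_{i_\nu}$ by $c_{i_\nu}(g)e_1$ in the $\nu$th position and then moving that $e_1$ past $\nu-1$ wedge factors yields $(-1)^{\nu-1}c_{i_\nu}(g)\,e_1\wedge e_{J\setminus\{i_\nu\}}$. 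Setting $I=J\setminus\{i_\nu\}$ reads off exactly the entry in the top-right block prescribed in the statement; all other entries of that block are zero because $I\subset J$ is forced if there is to be a nonzero contribution. The cocycle and continuity conditions play no role in the calculation itself; they only guarantee that the resulting matrix entries lie in $A$ and that $\bigwedge^k\rho$ is continuous, hence the expression is a legitimate representation.

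There is no serious obstacle: the statement is essentially a bookkeeping identity for exterior powers of a matrix whose only non-identity column is the first, and the only point requiring care is the sign $(-1)^{\nu-1}$, which comes from the single transposition of $e_1$ past the $\nu-1$ earlier factors of the wedge.
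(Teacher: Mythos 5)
Your proof is correct and is essentially the paper's own argument: both expand $\bigwedge^k\rho(g)$ on the standard wedge basis, observe that wedges containing $e_1$ span an invariant subspace on which the action is $c_1$ while the quotient is trivial, and read off the cocycle entries with the sign $(-1)^{\nu-1}$ from moving $e_1$ past the earlier factors. No further changes are needed.
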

\begin{proof}
Denote the standard basis of $A^n$ by $e_1,\dots,e_n$ the subspace
spanned by $e_1\wedge e_{i_2}\dots\wedge e_{i_k}$ is $\bigwedge^k\rho$ 
invariant and the quotient carries the trivial action. This show that the 
diagonal blocks are as asserted. To check the cocycle matrix note that:
\begin{eqnarray*}
&&\bigwedge^k\rho(e_{i_1}\wedge\dots\wedge e_{i_k})
-e_{i_1}\wedge\dots\wedge e_{i_k}\\
&&=\sum_{\nu=1}^k(-1)^{\nu-1}c_{i_\nu}e_1\wedge e_{i_2}\dots\wedge 
e_{i_{\nu-1}}\wedge e_{i_{\nu+1}}\wedge\dots\wedge e_{i_k}
\end{eqnarray*}
\end{proof}
\begin{lem}
\label{vzehn}
Let the tuple $(Y_\xi^{(1)}$, $Y_\xi^{(0)}$, $\iota^{(1)}$, $\iota^{(0)}$, 
$\lambda^{(0\times1)}$, $\dots)$ over $k/\f_\gp$ be as before, and let
$\Df_\xi\cong\Spec R$ be the universal deformation space.
Let $\G^{(0)}$, $\G^{(1)}$, and $\G^{(k)}$ be the Barsotti-Tate groups over 
$R$ constructed above. Then for every $i\in\{1,\dots,m\}$ the (\'etale!) 
Barsotti-Tate group with $\O_{L_{\gq_i}}$-operation: 
$$
{(\G^{(0)}[\gq_i^\infty]\times_RR[\frac{1}{p}])}^{\otimes_{\O_{L_{\gq_i}}}1-k}
\otimes_{\O_{L_{\gq_i}}}\bigwedge_{\O_{L_{\gq_i}}}^k
(\G^{(1)}[\gq_i^\infty]\times_RR[\frac{1}{p}])
$$
is canonically isomorphic to $\G^{(k)}[\gq_i^\infty]\times_RR[\frac{1}{p}]$.
\end{lem}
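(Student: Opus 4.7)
The plan is to compare both $\O_{L_{\gq_i}}$-Barsotti-Tate groups after inverting $p$ by looking at their Tate modules with continuous $\pi_1(\Spec R[\frac{1}{p}])$-action: since $p$ is invertible in $R[\frac{1}{p}]$, every finite flat $p$-primary group scheme over $R[\frac{1}{p}]$ is \'etale, so both sides are determined, up to canonical isomorphism, by these Tate modules. For $i\in\{r+1,\dots,m\}$ one has ${T_i^{(1)}}''=0$, hence ${T_i^{(k)}}''=0$, so $\G^{(k)}[\gq_i^\infty]$ is \'etale and agrees visibly with the \'etale tensor construction pulled back from the special fibre. I therefore focus on $i\in\{1,\dots,r\}$, where the formal $\O_{L_{\gq_i}}$-module ${T_i^{(1)}}''\otimes_{\O_{L_{\gq_i}}}\Sigma_i$ has $\O_{L_{\gq_i}}$-height one.

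First I would compute the Galois representation on the tensor construction. Choose $\O_{L_{\gq_i}}$-bases $e_1''$ of ${T_i^{(1)}}''$ and $e_2',\dots,e_n'$ of ${T_i^{(1)}}'$, and write $\phi^{(1)}_{1,j}\in\Sd_i(R)$ for the matrix entries of $\phi^{(1)}$. Lemma \ref{trivial} presents the Galois action on the Tate module of $\G^{(1)}[\gq_i^\infty]\times_RR[\frac{1}{p}]$, in the basis $e_1'',e_2',\dots,e_n'$, as the upper triangular matrix from lemma \ref{vdrei} with top left entry the Lubin-Tate character $c_1$ attached to $\Sigma_i$ and first-row entries $c_j$ the $1$-cocycles attached to the $\phi^{(1)}_{1,j}$. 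Lemma \ref{vdrei} now describes the Galois representation on $\bigwedge^k$ of this Tate module as an upper triangular matrix whose off-diagonal cocycle block $C$ is given by the alternating-sign exterior-product recipe. Since $\G^{(0)}[\gq_i^\infty]$ is \'etale and pulled back from $k$, its Tate module has trivial Galois action, and tensoring with ${{T_i^{(0)}}'}^{\otimes 1-k}$ leaves the shape of the representation intact.

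On the other side, $\G^{(k)}[\gq_i^\infty]$ is defined via Serre-Tate canonical coordinates with deformation parameter $g_{can,i}^{(k)}(\phi^{(1)})$. A second application of lemma \ref{trivial}, this time to $\G^{(k)}[\gq_i^\infty]$ in the obvious exterior bases built out of the $e_j'$ and $e_1''$, presents its Galois representation as the upper triangular matrix of lemma \ref{vdrei} whose cocycle entries are the matrix entries of $g_{can,i}^{(k)}(\phi^{(1)})$. By the explicit contraction formula defining $g_{can,i}^{(k)}$ these entries are precisely the $(-1)^{\nu-1}\phi^{(1)}_{1,i_\nu}$ appearing in the block $C$ of the previous paragraph, so the two Galois representations coincide and the induced isomorphism of Tate modules yields the asserted canonical isomorphism of \'etale Barsotti-Tate groups with $\O_{L_{\gq_i}}$-operation over $R[\frac{1}{p}]$.

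The main obstacle is to verify that the assignment $\phi^{(1)}_{1,j}\mapsto c_j$ implicit in lemma \ref{trivial} is $\O_{L_{\gq_i}}$-linear and compatible with $\O_{L_{\gq_i}}$-module constructions on the ${T_i^{(k)}}'$, ${T_i^{(k)}}''$, so that it really commutes with the contraction $g_{can,i}^{(k)}$ entry by entry. This is the additivity already used in the proof of lemma \ref{trivial}, but weaving it through the exterior algebra bookkeeping of lemma \ref{vdrei} is where the bulk of the technical care must be spent.
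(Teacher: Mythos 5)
Your proposal is correct and follows essentially the same route as the paper: the paper's proof likewise regards both sides over $R[\frac{1}{p}]$ as representations of $\pi_1(\Spec R[\frac{1}{p}],\Spec k)$ and concludes by combining lemma \ref{trivial} (Galois action of a deformation in Serre--Tate coordinates) with lemma \ref{vdrei} (exterior powers of such upper-triangular representations), exactly as you do, with your matching of the contraction $g_{can,i}^{(k)}$ against the cocycle block $C$ being the implicit computation. Your separate treatment of the \'etale case $i\in\{r+1,\dots,m\}$ and the linearity/additivity check are just explicit versions of what the paper leaves tacit.
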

\begin{proof}
We may regard these objects over $R[\frac{1}{p}]$ as representations of
$\pi_1(\Spec R[\frac{1}{p}],\Spec k)$, now use lemma 
~\ref{vdrei}, and lemma ~\ref{trivial}.
\end{proof}

\subsection{Extension of $Y^{(k)}$}
\label{algebraic}

We continue to assume that $\spadesuit$ is valid:

\begin{thm}
The abelian scheme $Y^{(k)}$ over $M^{(0\times1)}$ extends to an abelian 
scheme $\Y^{(k)}$ over the whole of $\M_{ord}^{(0\times1)}$. It inherits a 
$\O_L$-operation from the $\O_L$-operation $\iota^{(k)}$ on $Y^{(k)}$ in a 
unique way.
\end{thm}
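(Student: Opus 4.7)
The plan is to extend the classifying map $g^{(k)}: M^{(0\times 1)} \to \A_{g_k,d_k,l}$ of \eqref{fuenfi} across the ordinary locus, and then define $\Y^{(k)}$ as the pullback of the universal polarized abelian scheme. Let $\overline{\Gamma}$ denote the scheme-theoretic closure of the graph of $g^{(k)}$ in $\M_{ord}^{(0\times 1)} \times_{\O_{E_\gp}} \A_{g_k,d_k,l}$. The first projection $\pi: \overline{\Gamma} \to \M_{ord}^{(0\times 1)}$ is proper (by separatedness of $\A_{g_k,d_k,l}$) and birational. Since $\M_{ord}^{(0\times 1)}$ is normal (being smooth over $\O_{E_\gp}$), Zariski's main theorem will yield that $\pi$ is an isomorphism once we verify it is quasi-finite.

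To check quasi-finiteness at a closed ordinary point $\xi$, I would produce a section of $\pi$ over the formal neighborhood $\Spec R$, with $R = \widehat{\O}_{\M_{ord}^{(0\times 1)},\xi}$. Let $\O_B$ be the ring of integers in the complete unramified extension $B$ of $E_\gp$ with residue field $k(\xi)$. By Lemma \ref{rig}, the canonical lifts of $Y^{(0)}_\xi$ and $Y^{(1)}_\xi$ over $\O_B$ inherit the $\O_L$-action and polarization, so applying the tensor/wedge constructions of section \ref{Hodge} in mixed characteristic to them produces a polarized $\O_L$-abelian scheme $\widetilde Y^{(k)}$ over $\O_B$, whose reduction $\Y^{(k)}_\xi$ is our candidate special fibre. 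Combining Lemma \ref{coordinate}, the Barsotti-Tate group $\G^{(k)}$ constructed in subsection \ref{formal}, and the Serre-Tate theorem gives a unique formal deformation of $\Y^{(k)}_\xi$ over $\Df_\xi$; Grothendieck's algebraization then yields a polarized $\O_L$-abelian scheme $\Y^{(k)}_R$ over $\Spec R$, of degree $d_k$ (which is coprime to $p$ by Lemma \ref{prinz}), carrying the natural \'etale level-$l$-structure lifted uniquely from the generic fibre.

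Lemma \ref{vzehn} identifies the $p$-divisible group of $\Y^{(k)}_R$ with that of $Y^{(k)}$ over $\Spec R[\frac{1}{p}]$, respecting both the $\O_L$-action and the polarization; combined with the unique extension of the level-$l$-structure, the characteristic-zero moduli interpretation forces $\Y^{(k)}_R$ to restrict to $Y^{(k)}$ on $\Spec R[\frac{1}{p}]$. Hence the classifying morphism of $\Y^{(k)}_R$ provides a section $\Spec R \to \overline{\Gamma}$ of $\pi$, which is unique by separatedness of $\A_{g_k,d_k,l}$. In particular $\pi^{-1}(\xi)$ reduces to a single geometric point. Since $\xi$ was an arbitrary closed point of the special fibre, $\pi$ is quasi-finite, hence an isomorphism by Zariski's main theorem. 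The composition $\M_{ord}^{(0\times 1)} \xrightarrow{\pi^{-1}} \overline{\Gamma} \xrightarrow{\operatorname{pr}_2} \A_{g_k,d_k,l}$ is the desired extension; pulling back the universal family produces $\Y^{(k)}$, and the $\O_L$-action extends uniquely by the rigidity of the embedding $\O_L \hookrightarrow \End(Y^{(k)})$ over the normal base $\M_{ord}^{(0\times 1)}$.

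The hard part will be the middle paragraph, specifically the construction of the candidate special fibre $\Y^{(k)}_\xi$. The wedge/tensor construction of section \ref{Hodge} is a priori only meaningful in characteristic zero; invoking the canonical lift (Lemma \ref{rig}) circumvents this by performing the construction over $\O_B$ and then reducing. The delicate verification is that the resulting formal deformation on $\Df_\xi$ matches the Barsotti-Tate group $\G^{(k)}$ assembled from the contraction maps $g_{can,i}^{(k)}$ in subsection \ref{formal} — once this compatibility is in place, the remainder of the extension argument is essentially formal.
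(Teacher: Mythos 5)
Your overall architecture --- close the graph of $g^{(k)}$ in $\M_{ord}^{(0\times1)}\times\A_{g_k,d_k,l}$, show the first projection $\pi$ is an isomorphism via Zariski's main theorem, and pull back the universal family --- is the same as the paper's, but two of the hypotheses you feed into Zariski's main theorem are not actually established. First, properness of $\pi$ does \emph{not} follow from separatedness of $\A_{g_k,d_k,l}$: a closed subscheme of $\M_{ord}^{(0\times1)}\times\A_{g_k,d_k,l}$ is proper over $\M_{ord}^{(0\times1)}$ only if $\A_{g_k,d_k,l}$ is proper over the base, which it is not. This is exactly where the paper does real work: it checks the valuative criterion for the closure of the graph by showing that, for a discrete valuation ring dominating a local ring of the model, the $\ell$-adic Tate module of $Y^{(k)}$ at the generic point is unramified (this is where Lemma \ref{vvier} enters, transferring unramifiedness from $Y^{(0)}$ and $Y^{(1)}$, which do extend), and then invokes the N\'eron--Ogg--Shafarevich criterion to extend $Y^{(k)}$ over the valuation ring. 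Your proof has no substitute for this step, and without properness the quasi-finite birational map onto the normal target is at best an open immersion, leaving surjectivity to be proved separately.

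Second, your quasi-finiteness argument is a non sequitur: producing a (unique) section of $\pi$ over $\Spec\Rd$ shows that $\pi^{-1}(\xi)$ is non-empty, not that it is a single point --- closures of graphs of rational maps routinely acquire positive-dimensional fibres, and nothing you say rules out points of $\overline{\Gamma}$ over $\xi$ that do not lie on the section. The paper instead argues at an \emph{arbitrary} point $x_0$ of the fibre: on the completed local ring of the graph closure at $x_0$ it identifies $\Y^{(k)}[\gq_i^\infty]$ with the group $\G^{(k)}[\gq_i^\infty]$ of subsection \ref{formal} (Lemma \ref{vzehn} together with Tate's extension theorem), observes that the latter is constant along the fibre over $\xi$, and concludes by Serre--Tate that $(\Y^{(k)},\lambda^{(k)})$ is constant there, so the classifying map to $\A_{g_k,d_k,l}$ contracts the whole fibre to a point. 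A smaller but real issue: your identification of the generic fibre of the algebraized deformation with $Y^{(k)}|_{\Spec R[1/p]}$ appeals to ``the characteristic-zero moduli interpretation,'' but $Y^{(k)}$ is not cut out by a moduli problem on $M^{(0\times1)}$ (the paper stresses that the maps $g^{(k)}$ have no natural moduli interpretation), so this matching would itself require an argument.
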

\begin{proof}
As $\M_{ord}^{(0\times1)}$ is disconnected we consider the connected 
components separetely, let $\N$ be one of them, it is an integral scheme, 
write $N$ for the generic fibre. We consider the morphism 
$g^{(k)}:N\rightarrow\A_{g_k,d_k,l}$, obtained from the abelian scheme 
$Y^{(k)}$ together with a choice of effective polarization in the homogeneous
class $\lambda^{(k)}$, as we did in ~\eqref{fuenf}. Here note that we can 
choose the effective polarization in the class $\lambda^{(k)}$ to have a 
degree $d_k$ coprime to $p$, because of lemma ~\ref{prinz}. Let $\N_0$ be the 
normalization of the schematic closure of the graph in 
$\N\times\A_{g_k,d_k,l}$, let $\chi$ be the projection from $\N_0$ to the 
$\N$-component. Let $\Nbar_0$, $\Nbar$, and $\Abar_{g_k,d_k,l}$, be the 
special fibres of $\N_0$, $\N$, and $\A_{g_k,d_k,l}$.\\

We want to prove that the fibres of $\chi$ are all $0$-dimensional. By the 
semicontinuity of fibre dimensions it is enough to consider 
$\f_\gp^{ac}$-valued points $x_0$ of $\Nbar_0$, lying over some
$\f_\gp^{ac}$-valued point $(x,y)$ of $\Nbar\times\Abar_{g_k,d_k,l}$. Write 
$R$ for the local ring of $\O_{E_\gp^{nr}}\times_{\O_{E_\gp}}\N_0$ at the 
closed point $x_0$ and let $I\subset R$ be the stalk at $x_0$ of the ideal 
sheaf to the closed immersion $$\f_\gp^{ac}\times_{\Nbar}\Nbar_0\hookrightarrow
\O_{E_\gp^{nr}}\times_{\O_{E_\gp}}\N_0.$$ Write $\Rd$ and $\Id$ for their 
completions at the maximal ideal to $x_0$. Consider the commutative 
diagram:

$$\begin{CD}
\Spec R@>>>
\O_{E_\gp^{nr}}\times_{\O_{E_\gp}}\N_0@>>>
\O_{E_\gp^{nr}}\times_{\O_{E_\gp}}\N\times\A_{g_k,d_k,l}\\
@AAA@AAA@AAA\\
\Spec(R/I)@>>>
\f_\gp^{ac}\times_{\Nbar}\Nbar_0@>>>
\f_\gp^{ac}\times\Abar_{g_k,d_k,l}\\
\end{CD}$$

Over $\Spec\Rd$ we have the pull-back of the universal abelian schemes 
$\Y^{(0)}$, $\Y^{(1)}$, and  $\Y^{(k)}$. From the composition 
$\Spec\Rd\rightarrow\Df_x$, we also have $\O_{L_{\gq_i}}$-Barsotti-Tate 
groups $\G^{(k)}[\gq_i^{\infty}]$ according to subsection ~\ref{formal}.
By lemma ~\ref{vzehn} the generic fibre of $\G^{(k)}[\gq_i^{\infty}]$ 
agrees canonically with the generic fibre of $Y^{(k)}[\gq_i^{\infty}]$, 
according to ~\cite[Theorem 4]{tate1} $\G^{(k)}[\gq_i^{\infty}]$ agrees with 
$\Y^{(k)}[\gq_i^{\infty}]$. It follows that $\Y^{(k)}[\gq_i^{\infty}]$ is
constant on $\Spec(\Rd/\Id)$ because $\G^{(k)}[\gq_i^{\infty}]$ is constant 
there. According to Serre-Tate, ~\cite[Chapter V,Theorem 2.3]{messing}
the polarized abelian scheme $(\Y^{(k)},\lambda^{(k)})$ is constant on 
$\Spec(\Rd/\Id)$, and on $\Spec(R/I)$ as well. This means that the natural 
map from the local ring $Q$ of $\f_\gp^{ac}\times\Abar_{g_k,d_k,l}$ at $y$ to 
$R/I$ factors through $y:Q\rightarrow\f_\gp^{ac}$. Now just notice that $R/I$ 
being the local ring of $\f_\gp^{ac}\times_{\Nbar}\Nbar_0$ at $x_0$ is finite 
over $Q$, so that $R/I$ is finite over $\f_\gp^{ac}$.\\

We next prove that $\chi$ is proper, it certainly suffices to show that the 
schematic closure of $N$ is proper over $\N$, we check this by using the 
valuative criterion of properness, ~\cite[Corollaire 7.3.10(ii)]{ega2}. Let 
$F$ be the function field of $N$. Let $x:\Spec F\rightarrow N$ be the generic 
point, and let $g^{(k)}\circ x=y:\Spec F\rightarrow M^{(0\times1)}$ be the 
composition. Let $R$ be a discrete valuation ring of $F$, dominating some 
local ring of $\O_\N$. We write $\gx:\Spec R\rightarrow\N$ for the 
corresponding morphism, and $\Y_{\gx}^{(0)}$ and $\Y_{\gx}^{(1)}$, (resp. 
$Y_x^{(0)}$ and $Y_x^{(1)}$), for the pull backs of the universal abelian 
schemes over $\M^{(0\times1)}$ via $\gx$ (resp. via $x$). Choose any prime 
$\ell$ different from $p$. The $\ell$-adic Tate modules of $Y_x^{(0)}$ and 
$Y_x^{(1)}$ are unramified. Therefore, if $Y_y^{(k)}$ denotes the pull back 
of $Y^{(k)}$ to $\Spec F$, the $\ell$-adic Tate module of $Y_y^{(k)}$ is 
unramified as well, by lemma ~\ref{vvier}. Due to N\'eron-Ogg-Safarevic's
criterion, ~\cite[Theorem 1]{serre3}, there exists an abelian scheme 
$\Y_{\gy}^{(k)}$ over $R$ extending $Y_y^{(k)}$, moreover $\Y_{\gy}^{(k)}$
inherits a polarization of degree $d_k$ and a level $l$-structure from 
$Y_y^{(k)}$. The corresponding $R$-valued point in the moduli space 
$\A_{g_k,d_k,l}$ establishes a $R$-valued point $\gy$ in the schematic 
closure of $N$, which lies over $\gx$.\\

We conclude that $\chi$ is an isomorphism, by using the Main Theorem of 
Zariski ~\cite[Corollaire 4.4.9]{ega3}.\\

The abelian schemes thus obtained have a $\O_L$-operation, for example 
because homomorphisms between abelian schemes over normal bases extend, 
~\cite[Chap.IX, Corollaire 1.4]{raynaud}.
\end{proof}

\begin{rem}
The $\Y^{(k)}$'s are $\O_L$-abelian schemes, because the structure of the 
$\O_L$-operation on the Lie algebra can be checked in the generic fibre.
\end{rem}

Over algebraically closed fields of characteristic $p$ one can clarify 
how the canonical lift of $Y^{(1)}$ relates to the canonical lift of 
$Y^{(k)}$:

\begin{lem}
\label{al}
Let $k$ be an algebraically closed field over $\f_\gp$. Let $(Y^{(1)}$, 
$Y^{(0)}$, $\iota^{(1)}$, $\iota^{(0)}$,$\lambda^{(0\times1)}$, 
$\ebar^{(0\times1)})$ be a $k$-valued point of $\M_{ord}^{(0\times1)}$. Let 
$(\tilde Y^{(1)}$, $\tilde Y^{(0)}$, $\dots)$ be the canonical lift over 
$\O_B$. Let $\gx:\Spec\O_B\rightarrow\M_{ord}^{(0\times1)}$ be the 
classifying morphism. Then the $\O_L$-abelian scheme $\Y^{(k)}_\gx$ over 
$\O_B$ is the canonical lift of its special fibre.
\end{lem}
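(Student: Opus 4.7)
The plan is to verify triviality of Serre-Tate coordinates. Recall from subsection \ref{lift} that an ordinary $\O_L$-abelian scheme over $\O_B$ is the canonical lift of its special fibre precisely when every $\O_{L_{\gq_i}}$-Barsotti-Tate group $\Y^{(k)}[\gq_i^\infty]$ has trivial Serre-Tate parameter in the sense of lemma \ref{coordinate} (and the $\gq_i^*$-parts are then automatically canonical via Serre duality, using that the polarization is $p$-principal by lemma \ref{prinz}). So it is enough to show that, after restricting to $\Spec\O_B$ via $\gx$, each $\Y^{(k)}_\gx[\gq_i^\infty]$ has vanishing Serre-Tate coordinate.

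To do this I would pull back to the universal deformation space. The map $\gx$ factors through the canonical lift point $\gx_0:\Spec\O_B\to\Df_\xi=\Spec R$, which by definition corresponds to Serre-Tate parameter $\phi_i^{(1)}=0$ for every $i\in\{1,\dots,r\}$ (and analogously $\phi_i^{(0)}=0$; the zero modules appearing in ${T_i^{(0)}}''$ make this automatic). Now in the proof of the extension theorem it was shown, via Tate's theorem \cite[Theorem 4]{tate1}, that the Barsotti-Tate groups $\G^{(k)}[\gq_i^\infty]$ constructed in subsection \ref{formal} over the formal neighbourhood of $\xi$ coincide with $\Y^{(k)}[\gq_i^\infty]$. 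Thus the Serre-Tate parameter of $\Y^{(k)}_\gx[\gq_i^\infty]$ is nothing but $g_{can,i}^{(k)}(\phi_i^{(1)})$ pulled back along $\gx_0$.

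Since the contraction map $g_{can,i}^{(k)}$ is $\O_{L_{\gq_i}}$-linear in its entry $\phi^{(1)}$ (inspect the defining formula), it carries $0$ to $0$. Pulling back along $\gx_0$ therefore yields Serre-Tate parameter $g_{can,i}^{(k)}(0)=0$ for $\Y^{(k)}_\gx[\gq_i^\infty]$, for every $i$. This is the characterization of the canonical lift, so $\Y^{(k)}_\gx$ is the canonical lift of its special fibre. The only mild subtlety is bookkeeping the $\gq_i^*$-parts, but these are handled by duality under the polarization $\lambda^{(k)}$ together with the fact that canonical lifts are preserved by Serre duality.
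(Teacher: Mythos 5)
Your proof is correct, but it runs along a different track than the paper's. The paper's own argument is a three-line reduction at the point $\gx$ itself: by Tate's full-faithfulness theorem over $\O_B$ (\cite[Theorem 4]{tate1}), the $\O_{L_{\gq_i}}$-Barsotti--Tate groups $\Y^{(k)}_\gx[\gq_i^\infty]$ split into connected $\oplus$ \'etale as soon as the Galois representation on the generic fibre splits, and that splitting is immediate from lemma~\ref{vvier}, since the Tate module of $Y^{(k)}$ is the exterior-power construction applied to the Tate modules of $Y^{(0)},Y^{(1)}$, which for the canonical lift are themselves split. You instead work over the whole universal deformation space: you invoke the identification $\Y^{(k)}[\gq_i^\infty]\cong\G^{(k)}[\gq_i^\infty]$ from the proof of the extension theorem, observe that by construction the Serre--Tate parameter of $\G^{(k)}[\gq_i^\infty]$ is $g_{can,i}^{(k)}(\phi_i^{(1)})$, and use additivity of the contraction map to conclude that the parameter vanishes along the zero section $\gx_0$. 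What your route buys is a stronger, more structural statement -- the full compatibility $\phi\mapsto g_{can,i}^{(k)}(\phi)$ of canonical coordinates under $g^{(k)}$, of which the lemma is the special case $\phi=0$ -- at the cost of importing the entire formal-neighbourhood identification; the paper's route needs only the pointwise Galois-theoretic input and one application of Tate's theorem over $\O_B$. One small point you should make explicit: in the extension theorem the identification $\G^{(k)}[\gq_i^\infty]\cong\Y^{(k)}[\gq_i^\infty]$ was carried out over the completed local rings at closed points of the special fibre with residue field $\f_\gp^{ac}$, whereas here $k$ is an arbitrary algebraically closed extension of $\f_\gp$; the same argument (lemma~\ref{vzehn} plus \cite[Theorem 4]{tate1} applied to the normal noetherian domain $R$ underlying $\Df_\xi$) goes through verbatim, but it is a re-run of that argument rather than a literal citation. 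Your remarks about the $\gq_i^*$-parts and the $p$-principality coming from lemma~\ref{prinz} are consistent with the paper's definition of the canonical lift, which only constrains the coordinates of the $\gq_i$-parts.
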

\begin{proof}
We only have to check that all the $\O_{L_{\gq_i}}$-Barsotti-Tate groups 
split, again by ~\cite[Theorem 4]{tate1} this follows if the Galois 
representation splits. This is clear, by lemma ~\ref{vvier}.
\end{proof}

\section{Endomorphism ring of $Y^{(k)}$}
\label{ende}

In ~\cite{satake} the endomorphism ring of the generic $Y^{(k)}$ is studied. 
We extend this study to the special fibre of $\Y^{(k)}$. We start with
preliminary remarks on the Mumford-Tate group: Consider a $\c$-valued point
$(Y_\xi^{(1)}$, $Y_\xi^{(0)}$, $\iota^{(1)}$, $\iota^{(0)}$, 
$\lambda^{(0\times1)}$, $\dots)$ of 
${_{K^{(0\times1)}}M}(G^{(0\times1)},X^{(0\times1)})$, then
$$
V_\q^{(1)}=H_1(Y^{(1)}(\c),\q)
$$
is a Hodge structure with an operation of $L$ on it. Let us write 
$\MT\subset\GL(V_\q^{(1)}/L)$, for the smallest algebraic group over $L$
such that $\MT\times_{L,\sigma}\c$ contains for every embedding 
$\sigma:L\rightarrow\c$ the cocharacter 
$\mu_\sigma:\g_m\times\c\rightarrow\MT\times_{L,\sigma}\c$ given by 
$$
\mu_\sigma(z):x\mapsto
\begin{cases}
zx&x\in{V_\sigma^{(1)}}^{-1,0}\\
x&x\in{V_\sigma^{(1)}}^{0,-1}
\end{cases}
$$
the following is very well-known:
\begin{thm}
\label{rib}
If $\End_L^0(Y_\xi^{(1)})=L$ then the group $\MT$ to the Hodge structure with 
$L$-operation $V_\q^{(1)}=H_1(Y_\xi^{(1)},\q)$ is the full linear group
$\GL(V_\q^{(1)}/L)$, in particular $\End_L^0(Y_\xi^{(k)})=L$ for all $k$.
\end{thm}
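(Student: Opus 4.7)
My plan is to translate the endomorphism hypothesis into an absolute irreducibility statement for $\MT$ and then exploit the restrictive shape of the Hodge cocharacters $\mu_\sigma$ to force $\MT$ to be the whole general linear group. Since $V_\q^{(1)}$ is polarizable by $\psi^{(1)}$, the Mumford-Tate group $\MT$ is reductive, and the standard Mumford-Tate dictionary identifies $\End_L^0(Y_\xi^{(1)})$ with the centralizer of $\MT$ in $\End_L(V_\q^{(1)})$. The hypothesis $\End_L^0(Y_\xi^{(1)})=L$ therefore says that, after base change to an algebraic closure $\bar L$ of $L$, the group $\MT_{\bar L}$ acts absolutely irreducibly on $V_\q^{(1)}\otimes_L\bar L\cong\bar L^n$.

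I would then read off the shape of the Hodge cocharacters from the table computed in the proof of Lemma \ref{DATA}. Because the CM-traces satisfy $\Phi^{(0)}\ne\Phi^{(n)}$ by assumption, there exists an embedding $\sigma\in|\Phi^{(n)}|-|\Phi^{(0)}|$; for such $\sigma$ the cocharacter $\mu_\sigma$ of $\GL(V_\sigma^{(1)})\cong\GL_n$ has weights $(1,0,\dots,0)$ on the standard representation and is consequently conjugate to $z\mapsto\mathrm{diag}(z,1,\dots,1)$. Thus $\MT_{\bar L}$ is a connected reductive subgroup of $\GL_n/\bar L$ that acts absolutely irreducibly on $\bar L^n$ and contains a cocharacter whose fixed subspace has codimension one.

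The central step is the classification claim that any such subgroup must equal $\GL_n$. Decomposing $\Lie(\MT_{\bar L})\subset\mathfrak{gl}_n$ under $\mathrm{ad}(\mu_\sigma)$ into weight spaces $L^{-1}\oplus L^0\oplus L^1$, absolute irreducibility prevents $\MT_{\bar L}$ from being contained in the $\mu_\sigma$-centralizer $\g_m\times\GL_{n-1}$, so both $L^1$ and $L^{-1}$ are non-trivial. These sit inside the off-diagonal first-row and first-column entries of $\mathfrak{gl}_n$, and a bracket computation shows that $L^{-1}\oplus L^0\oplus L^1$ generates all of $\mathfrak{sl}_n$; equivalently, the parabolic attached to $\mu_\sigma$ makes $\MT_{\bar L}$ act transitively on $\p^{n-1}$, which among reductive irreducible subgroups of $\GL_n$ forces containment of $\mathrm{SL}_n$. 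Combining this with the scalars produced by the similitude factor yields $\MT_{\bar L}=\GL_n$, and descending to $L$ gives $\MT=\GL(V_\q^{(1)}/L)$. I expect this classification step to be the main technical hurdle, though the eigenvalue pattern $(z,1,\dots,1)$ rules out the usual symplectic, orthogonal, and exceptional candidates almost immediately, since their invariant forms demand eigenvalues to come in reciprocal pairs.

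For the \emph{in particular} clause, the Mumford-Tate group of the combined Hodge structure $V_\q^{(0)}\oplus V_\q^{(1)}$ surjects onto $\MT=\GL(V_\q^{(1)}/L)$. Via the tensor construction $V_\q^{(k)}=(V_\q^{(0)})^{\otimes 1-k}\otimes_L\bigwedge_L^kV_\q^{(1)}$, the induced representation on $V_\q^{(k)}$ is the rank-one $V_\q^{(0)}$-twist of the irreducible $\GL_n$-representation $\bigwedge^kV_\q^{(1)}$, hence absolutely irreducible. Therefore the Mumford-Tate group of $V_\q^{(k)}$ acts absolutely irreducibly on $V_\q^{(k)}\otimes_L\bar L$, its centralizer in $\End_L(V_\q^{(k)})$ is $L$, and by the Mumford-Tate dictionary this centralizer equals $\End_L^0(Y_\xi^{(k)})$.
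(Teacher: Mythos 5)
Your overall architecture is the same as the paper's: reductivity of $\MT$ together with $\End_L^0(Y_\xi^{(1)})=L$ gives absolute irreducibility of the action on $V_\q^{(1)}\otimes_L\bar L$, the choice of $\sigma\in|\Phi^{(n)}|-|\Phi^{(0)}|$ produces a cocharacter conjugate to $\mathrm{diag}(z,1,\dots,1)$, and the ``in particular'' clause is handled exactly as in the paper, via absolute irreducibility of $\bigwedge^k$ of the standard representation (the twist by the rank-one factor coming from $V^{(0)}$ being harmless). Where you diverge is the central classification step, and there your argument has a genuine gap. The paper disposes of this step by citing Serre's Proposition 5 in \cite{serre2}, which is tailored to exactly this situation: a connected reductive subgroup of $\GL_n$ acting irreducibly and containing a cocharacter with weights $(1,0,\dots,0)$ on the standard representation must be all of $\GL_n$ (or at least contain $\operatorname{SL}_n$). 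Your substitute for this --- ``a bracket computation shows that $L^{-1}\oplus L^0\oplus L^1$ generates all of $\mathfrak{sl}_n$'' --- is not a computation but a restatement of the desired conclusion, since $L^{-1}\oplus L^0\oplus L^1$ is all of $\Lie(\MT_{\bar L})$; and the criterion you then invoke, that a reductive irreducible subgroup of $\GL_n$ acting transitively on $\p^{n-1}$ must contain $\operatorname{SL}_n$, is false: $\operatorname{Sp}_{2m}\subset\GL_{2m}$ is reductive, irreducible, and transitive on $\p^{2m-1}$, yet does not contain $\operatorname{SL}_{2m}$. (A smaller point: non-containment of $\MT_{\bar L}$ in the centralizer of $\mu_\sigma$ only gives $L^1\oplus L^{-1}\neq0$; to conclude that both pieces are nonzero you need reductivity, e.g.\ self-duality of the adjoint representation.)

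The repair is exactly the observation you relegate to a closing heuristic: for $n\geq3$ the eigenvalue pattern $(z,1,\dots,1)$ is incompatible with preservation of a non-degenerate bilinear form even up to similitude (the weight-$z$ line would have to pair onto the whole weight-$1$ hyperplane, which is impossible for dimension reasons), and more generally a semisimple group acting irreducibly and admitting a cocharacter with these weights is forced, by a minuscule-coweight/highest-weight analysis, to be $\operatorname{SL}_n$ in its standard representation. Making this precise is precisely the content of the result the paper quotes, so you should either cite \cite{serre2} as the paper does or carry out that classification in full; as written, the step would fail. Once $\MT_{\bar L}\supseteq\operatorname{SL}_n$ is known, you do not need any appeal to ``scalars produced by the similitude factor'': since $\det\mu_\sigma(z)=z$, the determinant maps $\MT_{\bar L}$ onto $\g_m$, whence $\MT_{\bar L}=\GL_n$, and descent of the $L$-subgroup gives $\MT=\GL(V_\q^{(1)}/L)$.
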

\begin{proof}
We follow the ideas of ~\cite[Theorem 3]{ribet}. Let us remark that 
$V^{(1)}$ is semisimple as a representation of $\MT$. This is because any 
$\MT$-invariant subspace $W$ is a Hodge structure with $L$-operation. 
Then take the orthogonal complement $W^*$ with respect to $\psi^{(1)}$. This 
is again a Hodge structure with $L$-operation, and hence a $\MT$ 
subrepresentation complementary to $W$. So $\MT$ is reductive, as it has a 
faithful semisimple representation $\rho$, namely the natural action on the 
$L$ vector space $V_\q^{(1)}$. A similar argument gives that $\End_\MT(\rho)$ 
coincides with $\End_L(V^{(1)})=L$.\\

Upon base change via some $\sigma:L\rightarrow \c$ we are in a position to 
apply ~\cite[Proposition 5]{serre2} to $\MT\times_{L,\sigma}\c$. Here note 
that we may choose $\sigma$ in the set 
$|\Phi^{(n)}|-|\Phi^{(0)}|\neq\emptyset$, so that the group 
$\mu_\sigma(\g_m)$ is contained in $\MT\times_{L,\sigma}\c$.\\

The assertion on $\End_L^0(Y_\xi^{(k)})$ just follows as the $k$th exterior
power is an absolutely irreducible representation of $\GL(V_\q^{(1)}/L)$.
\end{proof}

Using canonical lifts we easily get an analog in positive characteristic:

\begin{cor}
Assume that the $\spadesuit$-conditions of subsection ~\ref{moduli} hold. 
Consider a point $\xi:\Spec k\rightarrow\M_{ord}^{(0\times1)}$, where 
$k$ is an algebraically closed field over $\f_\gp$. If 
$\End_L^0(\Y_\xi^{(1)})=L$, then $\End_L^0(\Y_\xi^{(k)})=L$, for all $k$.
\end{cor}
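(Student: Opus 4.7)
The strategy is to transport the problem to a $\c$-valued point of the Shimura variety by passing through the canonical lift, apply theorem~\ref{rib} there, and descend back. Rigidity of the canonical lift (lemma~\ref{rig}) guarantees that the endomorphism ring travels with us at each step.

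Concretely, let $B/E_\gp$ be the complete unramified extension with residue field $k$, and let $\tilde\gx:\Spec\O_B\to\M_{ord}^{(0\times1)}$ be the classifying morphism of the canonical lift of $\xi$. By lemma~\ref{al}, each pulled-back abelian scheme $\Y_{\tilde\gx}^{(k)}$ is itself the canonical lift of $\Y_\xi^{(k)}$, so lemma~\ref{rig} yields $\End_L^0(\Y_{\tilde\gx}^{(k)})=\End_L^0(\Y_\xi^{(k)})$ for every $k$. Next I would fix an embedding $B\hookrightarrow\c$, producing a $\c$-valued point $\xi_\c$ of ${_{K^{(0\times1)}}M}(G^{(0\times1)},X^{(0\times1)})$ whose attached abelian varieties are $Y_{\xi_\c}^{(k)}=\Y_{\tilde\gx}^{(k)}\otimes_{\O_B}\c$. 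Since $k$ is already algebraically closed, every finite extension of $B$ inside the algebraic closure $\bar B\subset\c$ is totally ramified with residue field still $k$; hence $\Y_{\tilde\gx}^{(k)}$ extends over the integer ring of $\bar B$ with special fibre unchanged, and the faithful specialization map for abelian schemes over complete DVRs, together with the invariance of $\End^0$ under extension of algebraically closed fields in characteristic zero, gives
\[
\End_L^0(Y_{\xi_\c}^{(k)})=\End_L^0(\Y_\xi^{(k)}).
\]

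Applying this for $k=1$ to the hypothesis gives $\End_L^0(Y_{\xi_\c}^{(1)})=L$; theorem~\ref{rib} at $\xi_\c$ then yields $\End_L^0(Y_{\xi_\c}^{(k)})=L$ for every $k$, and transferring back via the displayed equality produces $\End_L^0(\Y_\xi^{(k)})=L$ as desired. The main subtlety lies in justifying the three-layer comparison of endomorphism rings across $\O_B$, $B$ and $\c$: without it one might conceivably pick up extra endomorphisms over $\c$ that descend only as far as $B$ and not to the closed fibre. This is handled uniformly by the algebraic closedness of $k$, which keeps the residue field pinned to $k$ even under base change to $\bar B$, together with lemma~\ref{rig} which supplies the necessary rigidity at the bottom of the tower.
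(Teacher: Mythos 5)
Your proposal is correct and follows essentially the same route as the paper: pass to the canonical lift over $\O_B$ (lemmas~\ref{rig} and~\ref{al}), embed into $\c$, apply theorem~\ref{rib} there, and transfer back. The only difference is cosmetic — you establish the full two-sided equality $\End_L^0(Y_{\xi_\c}^{(k)})=\End_L^0(\Y_\xi^{(k)})$ at every level, whereas the paper only needs the specialization inclusion for $k=1$ and the lifting inclusion (via canonical-lift rigidity) for general $k$; your extra care about descent from $\c$ to a finite, totally ramified extension of $B$ is sound since $k$ is algebraically closed.
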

\begin{proof}
Let $\gx:\Spec\O_B\rightarrow\M_{ord}^{(0\times1)}$ correspond to the 
canonical lift of $(\Y_\xi^{(1)}$, $\Y_\xi^{(0)}$, $\iota^{(1)}$, 
$\iota^{(0)}$, $\lambda^{(0\times1)}$, $\dots)$, where $\O_B$ is as in 
subsection ~\ref{lift}. Choose an embedding $\O_B\hookrightarrow\c$. We have 
$\End_L^0(\Y_\gx^{(1)}\times_{\O_B}\c)=L$, because this algebra must be 
contained in $\End_L^0(\Y_\xi^{(1)})=L$. By theorem ~\ref{rib} we infer 
$\End_L^0(\Y_\gx^{(k)}\times_{\O_B}\c)=L$, and therefore 
$\End_L^0(\Y_\gx^{(k)})=L$. Now we apply lemmas ~\ref{rig} and ~\ref{al}.
\end{proof}


\begin{thebibliography}{1}

\bibitem{deligne1}
P. Deligne, {\em Travaux de Shimura}, S\'eminaire Bourbaki Expos\'es
382-399, Springer Lecture Notes in Math. 244, p.123-165 

\bibitem{deligne3}
P. Deligne, {\em Vari\'et\'es de Shimura: interpr\'etation modulaire et 
techniques de construction de mod\`eles canoniques}, 
Proc. Symp. Pure. Math 33(1979) p.247-290

\bibitem{deligne2}
P. Deligne, {\em Hodge cycles on abelian varieties}, Hodge Cycles, 
Motives, and Shimura varieties, Springer Lecture Notes in Math. 900, 
p.9-100

\bibitem{ega2}
A. Grothendieck, {\em \'El\'ements de G\'eom\'etrie Alg\'ebrique II,
\'Etude globale \'el\'ementaire de quelques classes de morphismes}

\bibitem{ega3}
A. Grothendieck, {\em \'El\'ements de G\'eom\'etrie Alg\'ebrique III,
\'Etude Cohomologique des Faisceaux Coh\'erents (premi\`ere partie)}

\bibitem{hazewinkel}
M. Hazewinkel, {\em Formal Groups and Applications} 
Pure and Applied Mathematics 78

\bibitem{katz}
N. Katz, {\em Serre-Tate Local Moduli}, Surfaces Alg\'ebriques, Springer
Lecture Notes in Math 868, p.138-202

\bibitem{kottwitz}
R. Kottwitz, {\em Points on some Shimura varieties over finite fields}
Journal of the AMS 5(1992), p.373-444

\bibitem{messing}
W. Messing, {\em The Crystals Associated to Barsotti-Tate Groups: with
Applications to Abelian Schemes} Springer Lecture Notes in Math. 264

\bibitem{rapoport}
M. Rapoport, T. Zink, {\em Period Spaces for $p$-divisible Groups}
Annals of Mathematical Studies 141, Princeton University Press

\bibitem{richartz}
M. Rapoport, M. Richartz, {\em On the classification and specialization 
of $F$-isocrystals with additional structure} Compos. Math. 103(1996) 
p.153-181

\bibitem{raynaud}
M. Raynaud, {\em Faisceaux amples sur les sch\'emas en groupes et les
espaces homog\`enes} Springer Lecture Notes in Math. 119

\bibitem{ribet}
K. Ribet, {\em Hodge classes on certain types of abelian varieties}
Amer. J. Math. 105(1983) p.523-538

\bibitem{satake}
I. Satake, {\em Algebraic Structures of Symmetric Domains}
Publications of the Mathematical Society of Japan 14, Iwanami Shoten 1980

\bibitem{tate1}
J. Tate, {\em $p$-divisible groups}, Proceedings of a conference on local 
fields (1966)

\bibitem{serre2}
J.-P. Serre, {\em Sur les groupes de Galois attach\'es aux groupes 
$p$-divisibles} Proceedings of a conference on local fields (1966)

\bibitem{serre3}
J.-P. Serre, J. Tate, {\em Good reduction of Abelian varieties}
Ann. Math. 88(1968) p.492-517

\bibitem{vasiu}
A. Vasiu, {\em Integral canonical models of Shimura varieties of preabelian 
type}, Asian J. Math. 3(1999), p.401-518

\end{thebibliography}
\end{document}